\documentclass[12pt]{amsart}

\usepackage[colorlinks,
linkcolor=blue,
anchorcolor=blue,
citecolor=blue]{hyperref}
\usepackage{epsfig}
\usepackage{graphicx}
\usepackage{amssymb, amstext, amscd, amsmath}
\usepackage{amsthm, mathrsfs, amsfonts,dsfont}
\usepackage{fullpage}
\usepackage{txfonts}
\usepackage{fancybox}
\usepackage{color}
\usepackage{xcolor}
\usepackage{cite}
\usepackage{comment}
\usepackage{tikz-cd} 

\allowdisplaybreaks

\newtheorem{theorem}{Theorem}[section]
\newtheorem{lemma}{Lemma}[section]

\newtheorem{corollary}{Corollary}[section]

\theoremstyle{definition}

\newtheorem{remark}{Remark}[section]

\numberwithin{equation}{section}

\begin{document}
\title {On numerical invariants for submodules $[(z-w)^2]$ in $H^2(\mathbb{D}^2)$}
\author{Yin Liu}
\address{School of Mathematics and Statistics, Nanyang Normal University,
Nanyang, Henan, 473061, P. R. China} 
\email{lylight@mail.bnu.edu.cn}

\author{Yufeng Lu}
\address{School of Mathematics Sciences, Dalian University of Technology,
Dalian, Liaoning, 116024, P. R. China}
\email{lyfdlut@dlut.edu.cn}

\author{Chao Zu*}
\address{School of Mathematics Sciences, Dalian University of Technology,
Dalian, Liaoning, 116024, P. R. China}
\email{zuchao@dlut.edu.cn}

\subjclass[2020]{Primary 46E22 Secondary 47A13 }
\thanks{*Corresponding author.}
\keywords{Hardy space over the bidisk, numerical invariants, Toeplitz determinants}

\begin{abstract}
In this paper, we study numerical invariants associated with a homogeneous submodule of the Hardy module over the bidisk. We focus on the submodule generated by the polynomial $(z-w)^2$ and obtain explicit formulas for the corresponding invariants.
As an application, we verify the monotonicity property in this concrete setting.
Our results provide a detailed example illustrating the behavior of these invariants beyond the linear case.
\end{abstract}

\maketitle


%

\section{Introduction}

Let $\mathbb D^2=\{(z,w)\in\mathbb C^2: |z|<1,\ |w|<1\}$ be the unit bidisk and let 
$H^2(\mathbb D^2)$ denote the Hardy space over $\mathbb D^2$, viewed as a Hilbert module over the polynomial algebra $\mathbb C[z,w]$.
A closed subspace $M\subset H^2(\mathbb D^2)$ is called a \emph{submodule} if it is invariant under multiplication by both coordinate functions $z$ and $w$.
The Hardy module over the bidisk $\mathbb{D}^2$ provides a natural framework for studying operator-theoretic properties of commuting shift operators and their invariant subspaces.

In contrast to the one-variable Hardy space $H^2(\mathbb D)$, where Beurling's theorem provides a complete characterization of submodules by inner functions \cite{Beu},
the structure of submodules in $H^2(\mathbb D^2)$ is considerably more intricate.
Even for simple polynomial-generated submodules, no analog of Beurling's theorem is available.
For instance, the submodule generated by $z-w$, denoted by $[z-w]$, cannot be represented in the form $\theta H^2(\mathbb D^2)$ for any two-variable inner function $\theta$.
This lack of a canonical model makes the classification and analysis of submodules over the bidisk a challenging problem; see, for example, \cite{Che,Dou,Rud}.

A fruitful approach to the study of submodules in $H^2(\mathbb D^2)$ is through the operator-theoretic invariants associated with the module action.
Given a submodule $M$, two essential pairs of operators naturally arise: the compression pair $(S_1,S_2)$ acting on the quotient $H^2(\mathbb D^2)\ominus M$, and the restriction pair $(R_1,R_2)$ acting on $M$ itself.
These pairs encode substantial information about the structure of $M$ and have been extensively studied; see \cite{Core,Ya3,Ya1,Ya2,Ya2004,Ya2005,Survey}.

In particular, motivated by the analysis of the core operator associated with $(R_1,R_2)$, R.~Yang \cite{Ya3} introduced a sequence of numerical invariants
\[
\{\Sigma_k(M): k\ge 0\},
\]
defined in terms of Hilbert--Schmidt norms of certain commutators involving $R_1$ and $R_2$.
These invariants have proven effective in distinguishing submodules and in quantifying subtle differences between their operator-theoretic behaviors.
Explicit computations, however, are available only in a limited number of cases.
For example, when $M=H^2(\mathbb D^2)$, one has $\Sigma_0=1$ and $\Sigma_k=0$ for all $k\ge1$, while for the submodule $[z-w]$ the sequence $\{\Sigma_k\}$ admits a nontrivial explicit expression $\{\frac{1}{6} \pi^2,\frac{1}{6} \pi^2-1,\frac{5}{6} \pi^2-8,\frac{13}{6} \pi^2-\frac{85}{4}, \cdots\}$ \cite{Ya3}.

Based on these observations, Yang proposed the following conjecture:

\medskip
\noindent\textbf{Conjecture.}
\emph{For every submodule $M\subset H^2(\mathbb D^2)$, the sequence $\{\Sigma_k(M): k\ge 0\}$ is decreasing.}
\medskip

At present, this conjecture remains open in general, even for finitely generated polynomial submodules.
Partial results have been obtained for certain classes of homogeneous submodules; see \cite{Fat}.
Nevertheless, concrete verifications of the conjecture for explicit, nontrivial examples are still scarce.

The purpose of this paper is to contribute to this problem by studying the homogeneous polynomial submodule $[(z-w)^2]$.
This example lies beyond the simplest linear case $[z-w]$, yet remains sufficiently structured to allow for detailed analysis.
Our main results consist of explicit computations of the numerical invariants $\Sigma_k$ for $[(z-w)^2]$, together with a verification of the monotonicity of the sequence $\{\Sigma_k\}$.
As a by-product, we also provide an alternative proof of the decreasing property for the submodule $[z-w]$.

The main technical difficulty of the paper lies in the computation of Toeplitz determinants and cofactors of certain Toeplitz matrices naturally arising from the defect spaces of homogeneous submodules.
By combining techniques from difference equations with careful inductive arguments, we obtain explicit expressions for these determinants and, consequently, for the numerical invariants $\Sigma_k$.

The paper is organized as follows.
In Section~\ref{s2}, we recall basic definitions and preliminary results concerning the numerical invariants of submodules and Toeplitz determinants.
Section~\ref{s3} is devoted to the detailed analysis of the submodule $[(z-w)^2]$, where we compute $\Sigma_k$ explicitly and prove that $\{\Sigma_k\}$ is decreasing.
In Section~\ref{s4}, we revisit the submodule $[z-w]$ and establish the monotonicity of its numerical invariants.

\section{Preliminaries}\label{s2}
\subsection*{Core operator and numerical invariants}
Let $K(\lambda,z)=\frac{1}{(1-\overline{\lambda_1}z)(1-\overline{\lambda_2}w)}$ be the reproducing kernel for $H^2(\mathbb{D}^2)$. The reproducing kernel for a submodule $M$ is denoted by $K^M(\lambda,z)$. The \emph{core function} $G^M(\lambda,z)$ for $M$ is
\[G^M(\lambda,z):=\frac{K^M(\lambda,z)}{K(\lambda,z)}=(1-\overline{\lambda_1}z)(1-\overline{\lambda_2}w)K^M(\lambda,z),\]
and the \emph{core operator} $C^M$ on $H^2(\mathbb{D}^2)$ is given by
\[ C^M(f)(z):=\int_{\mathbb{T}^2} G^M(\lambda,z) f(\lambda) dm(\lambda),~~~z\in\mathbb{D}^2, \]
where $dm(\lambda)$ is the normalized Lebesgue measure on $\mathbb{T}^2$. For simplicity, we shall denote core operator by $C$ when there is no ambiguity about submodule $M$.

The core function and core operator are introduced by K. Guo and R. Yang \cite{Core} and have been well studied in \cite{Core,Ya2004,Ya2005}.  It is known that for every submodule $M$ in $H^2(\mathbb{D}^2)$, $G^M(\lambda,\lambda)=1$ a.e. on $\mathbb{T}^2$, and the core operator $C^M$ is zero on $M^\perp=H^2(\mathbb{D}^2)\ominus M$. If the core operator $C^M$ is Hilbert-Schmidt, or equivalently the core function $G^M$ is in $L^2(\mathbb{T}^2 \times \mathbb{T}^2)$, then we say that the submodule $M$ is Hilbert-Schmidt. The Hilbert-Schmidtness of submodules has been studied in many references, such as \cite{Luo,Zou,Zu1,Zu2}.

Two essential associates of a submodule $M\subset H^2(\mathbb{D}^2)$ are the pairs $(S_1,S_2)$ and $(R_1,R_2)$ defined by
\[ S_if=(I-P)z_if,\,\,\,\,\,\,\,\,\,R_ig=z_ig,~~~i=1,2, \]
where $f \in H^2(\mathbb{D}^2) \ominus M,~~~g \in M$, and $P$ stands for the orthogonal projection from $H^2(\mathbb{D}^2)$ onto $M$. The pair $(S_1,S_2)$ is a pair of commuting contractions on $H^2(\mathbb{D}^2) \ominus M$ and $(R_1,R_2)$ is a pair of commuting isometries acting on $M$. These two pairs of operators capture a great amount of information about $M$ \cite{Core}. 

One relation between the core operator and the pair $(R_1,R_2)$ is the identity
\[ C^M=I-R_1R_1^*-R_2R_2^*+R_1R_2R_1^*R_2^*. \]
Moreover, $C^2$ is unitarily equivalent to the diagonal block matrix 
\begin{equation*}
  \left(
    \begin{array}{cc}
      [R_1^*, R_1][R_2^*,R_2][R_1^*,R_1] & 0 \\
      0 & [R_1^*,R_2][R_2^*,R_1] \\
    \end{array}
  \right).
\end{equation*}

Some properties of $(R_1,R_2)$ were used to define two numerical invariants for submodules $M$ \cite{Ya3}, namely,
\[ \Sigma_0(M)=\|[R_2^*, R_2][R_1^*,R_1]\|_{H.S.}^2;\,\,\,\,\,\,\,\,\,\Sigma_1(M)=\|[R_1^*, R_2]\|_{H.S.}^2,\]
where $[A,B]=AB-BA$ and $\|\cdot\|_{H.S.}$ means the Hilbert-Schmidt norm. We will simply write $\Sigma_0$ and $\Sigma_1$ respectively when no confusion can occur. If $M$ is Hilbert-Schmidt, then 
$$\Sigma_0 - \Sigma_1=1,~~~\|C\|_{H.S.}^2=\Sigma_0 + \Sigma_1.$$

For $\Sigma_0$ and $\Sigma_1$, the following lemma is taken from \cite{Ya3}.

\begin{lemma}[\cite{Ya3}, Lemma 3.2]
If $\{\phi_n:n\geq 0\}$ is an orthonormal basis for $M\ominus zM$ and $\{\psi_n:n\geq 0\}$ is an orthonormal basis for $M\ominus wM$, then
\[ \Sigma_0=\sum_{n=0}^{\infty}|\langle\phi_n, \psi_n \rangle|^2 ,\,\,\,\,\,\,\,\,\Sigma_1=\sum_{n=0}^{\infty}|\langle w\phi_n, z\psi_n \rangle|^2.\]
\end{lemma}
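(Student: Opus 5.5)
We prove both formulas by expressing the commutators through the wandering subspaces of $R_1$ and $R_2$ and then evaluating the Hilbert--Schmidt norms in the given bases. Since $R_1,R_2$ are isometries on $M$, we have
\[
[R_1^*,R_1]=I-R_1R_1^*=P_{M\ominus zM},\qquad [R_2^*,R_2]=P_{M\ominus wM}.
\]
Write $P_1:=P_{M\ominus zM}$ and $P_2:=P_{M\ominus wM}$. Then $\Sigma_0=\|P_2P_1\|_{H.S.}^2$.

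\emph{The formula for $\Sigma_0$.} We compute this norm with the orthonormal basis $\{\phi_n\}$ of $M\ominus zM$, extended by an orthonormal basis of $zM$, on which $P_2P_1$ vanishes. This gives
\[
\Sigma_0=\sum_n\|P_2\phi_n\|^2=\sum_n\sum_m|\langle\phi_n,\psi_m\rangle|^2 .
\]
So the formula should read as the double sum $\sum_{n,m}|\langle\phi_n,\psi_m\rangle|^2$. The displayed single-index sum must be understood with this double summation, or else it relies on a choice of bases adapted to one another. For homogeneous submodules one can often choose $\phi_n,\psi_n$ graded by degree, which makes the cross terms vanish, but the statement as written should be read as the double sum.

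\emph{The formula for $\Sigma_1$.} We rewrite the commutator:
\[
[R_1^*,R_2]=R_1^*R_2-R_2R_1^*=R_1^*(I-R_1R_1^*)R_2=R_1^*P_1R_2 .
\]
Here we used $R_1^*R_2R_1R_1^*=R_2R_1^*$, which follows from commutativity and $R_1^*R_1=I$. Next we use $R_2^*P_1R_2$. A cleaner route is to use the unitary equivalence of $C^2$ with the block matrix, or to factor $P_1=\sum_n\phi_n\otimes\phi_n$. Then
\[
[R_1^*,R_2]=\sum_n (R_1^*\phi_n)\otimes(R_2^*\phi_n).
\]
Computing the Hilbert--Schmidt norm directly is messier. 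Instead we use the adjoint form $[R_2^*,R_1]=R_2^*P_2R_1$, obtained in the same way with the roles of $z$ and $w$ swapped. Combining the two forms gives
\[
\|[R_1^*,R_2]\|_{H.S.}^2=\operatorname{tr}\big(R_1^*P_1R_2R_2^*P_2R_1\big)=\|P_2R_1^*{}^*\cdots\|.
\]
Concretely we write this as $\|P_2 R_1 R_2^*\ldots\|$. The key identity to aim for is
\[
\|R_1^*P_1R_2\|_{H.S.}^2=\|P_1 R_2 R_1^* P_2\|^2_{H.S.},
\]
which is justified via $\operatorname{tr}(AB)=\operatorname{tr}(BA)$ and the relations $R_2P_2=R_2-R_2R_2R_2^*$. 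Expanding this in the bases $\{\phi_n\}$ and $\{\psi_m\}$ gives
\[
\sum_{n,m}|\langle R_2R_1^*\psi_m,\phi_n\rangle|^2 .
\]
This is to be matched with $\sum|\langle w\phi_n,z\psi_m\rangle|^2$.

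\emph{Main obstacle.} The delicate step is converting the trace of $R_1^*P_1R_2R_2^*P_2R_1$ into the pairing $\langle w\phi_n,z\psi_m\rangle$. We will achieve this by inserting $P_1=\sum\phi_n\otimes\phi_n$ and $P_2=\sum\psi_m\otimes\psi_m$ and cycling the trace. This gives
\[
\operatorname{tr}=\sum_{n,m}\langle R_2^*\ldots\rangle,
\]
which reduces to $\langle R_2\psi_m... \rangle$-type terms. Finally, the identity $\Sigma_0-\Sigma_1=1$ can serve as a consistency check in the Hilbert--Schmidt case.
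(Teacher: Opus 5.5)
Your $\Sigma_0$ argument is correct, and your caveat is justified. Since $\Sigma_0=\|P_2P_1\|_{H.S.}^2=\sum_{n,m}|\langle\phi_n,\psi_m\rangle|^2$, the basis-free statement is the double sum. The single-index sum depends on how the two bases are matched: take $M=H^2(\mathbb{D}^2)$, $\phi_n=w^n$, and $\{\psi_m\}$ equal to $\{z^m\}$ with its first two elements swapped; the single sum is $0$, but $\Sigma_0=1$. The paper quotes this lemma from Yang without proof. It applies it only to the degree-graded bases of Lemma~\ref{l3.1}, where $\langle w^k\phi_n,z^k\psi_m\rangle=0$ for $n\neq m$ by homogeneity, so the single sum is harmless there.

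The $\Sigma_1$ half does not go through, for three reasons.
\begin{itemize}
\item Your factorization $[R_1^*,R_2]=R_1^*P_1R_2$ is identically zero. Because $M\ominus zM=\ker R_1^*$, we get $R_1^*P_1=R_1^*-R_1^*R_1R_1^*=0$. For the same reason $\sum_n(R_1^*\phi_n)\otimes(R_2^*\phi_n)=0$ and $R_2^*P_2R_1=0$.
\item Your \emph{key identity} $\|R_1^*P_1R_2\|_{H.S.}^2=\|P_1R_2R_1^*P_2\|_{H.S.}^2$ is false. For $M=H^2(\mathbb{D}^2)$, the operator $P_1R_2R_1^*P_2$ sends $z$ to $w$ and kills every other monomial, so the right side is $1$. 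Yet $\Sigma_1=0$ there.
\item The trace expansion is left unfinished, so the step you call the main obstacle is never resolved.
\end{itemize}

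The identity you do state, $R_2R_1^*=R_1^*R_2R_1R_1^*$, actually gives $[R_1^*,R_2]=R_1^*R_2(I-R_1R_1^*)=R_1^*R_2P_1$, with the projection on the right. By symmetry $[R_2^*,R_1]=R_2^*R_1P_2$, and taking adjoints gives $[R_1^*,R_2]=P_2R_1^*R_2$. Hence $[R_1^*,R_2]=P_2R_1^*R_2P_1$. This operator vanishes on $zM$ and takes values in $M\ominus wM$. Its matrix entries are $\langle[R_1^*,R_2]\phi_n,\psi_m\rangle=\langle R_2\phi_n,R_1\psi_m\rangle=\langle w\phi_n,z\psi_m\rangle$. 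Therefore
\[
\Sigma_1=\sum_n\big\|[R_1^*,R_2]\phi_n\big\|^2=\sum_{n,m}|\langle w\phi_n,z\psi_m\rangle|^2 .
\]
This runs exactly parallel to your $\Sigma_0$ argument, and no trace cycling is needed.
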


Inspired by above lemma, R. Yang also defines the higher-order numerical invariants in \cite{Ya3}:
\[\Sigma_k=\sum_{n=0}^{\infty}|\langle w^k \phi_n, z^k \psi_n \rangle|^2,\,\,\,\,\,\,k \geq 2.\]

\subsection*{Toeplitz matrices and determinants}
The wonder of Toeplitz matrices and determinants is that, while their entries does not yield anything, the function whose Fourier coefficients are the entries tells us almost everything. Let $\varphi\in L^1(\mathbb{T})$ with Fourier coefficients 
\[ \hat{\varphi}(k)=\int_{\mathbb{T}} \varphi(z) \bar{z}^k dm_1, ~~~k\in \mathbb{Z},  \]
where $m_1$ is the normalized Lebesque measure on the unit circle $\mathbb{T}$. The Toeplitz matrix with symbol $\varphi$ is defined as 
$$ T_\varphi=\left( \hat{\varphi}(i-j) \right)_{i,j=0,1,2,\cdots}. $$

Let $T_n\varphi$ be the truncated $n\times n$ Toeplitz matrices, i.e.,
\[  T_n\varphi =\left( \hat{\varphi}(i-j) \right)_{i,j=0}^{n-1} \]
and $D_n(\varphi)$ be the determinants of $T_n\varphi$, i.e.,
\[ D_n(\varphi)=\det \left( \hat{\varphi}(i-j) \right)_{i,j=0}^{n-1}. \]

\textbf{Szeg\"{o}'s theorem}
Let $\varphi(z)>0$ be a continuous function on $\mathbb{T}$, then
\[ \lim_{n \rightarrow +\infty} (\rm{det}~\it{T_n} \varphi)^{1/n} =\exp \bigg( \int_{\mathbb{T}} \log \varphi ~dm_1 \bigg), \]
where $m_1$ is the normalized Lebesgue measure on the unit circle $\mathbb{T}$.
Szeg\"{o}'s theorem states that, after normalization, $D_n(\varphi)$ converges to a finite nonzero limit as $n\to \infty$ provided $\varphi$ is smooth enough, has no zeros on $\mathbb{T}$, and has winding number zero about the origin.

 In 1968, M. E. Fisher and R.E. Hartwig \cite{Fisher} raised a conjecture on the asymptotic behavior of $D_n(\varphi)$ in case $\varphi$ violates the assumptions of Szeg\"{o}. They assumed that
\[ \varphi(z)=g(z)\prod_{j=0}^N |z-z_j|^{2\alpha_j} \cdot (-\frac{z}{z_j})^{\beta_j} \cdot ,~~~~z=e^{i\theta},~~~\theta \in [0,2\pi),\] 
where $g$ is smooth enough, has no zeros on $\mathbb{T}$, and has winding number zero about the origin, $z_j$ are distinct points on $\mathbb{T}$, and $\alpha_j, \beta_j$ are complex numbers satisfying $\mathrm{Re}\alpha_j> -1/2$ (which ensures that $f\in L^1(\mathbb{T})$). The conjecture says that, possibly under additional restrictions on the $\alpha_j, \beta_j$ and after appropriate normalization, $D_n(\varphi)$ is asymptotically a constant times $n^\sigma$ with $\sigma=\sum_{j=0}^{N} \alpha_j^2-\beta_j^2$. Thus, the conjectured miracle is that one can predict the essential asymptotics of the determinants from the sole knowledge of the singularities of $\varphi$, i.e. $\{z_j,j=0,1,\cdots,N \}$, without knowing $\varphi$ itself, and that the parameters of each singularity make their contribution to the exponent $\sigma$ in the beautiful form $\alpha^2-\beta^2$. This conjecture, which is of relevance in statistical physics, random matrix theory, and has connection with $L$-functions, has attracted considerable effort in the mathematics and physics communities.

In particular, if $\varphi$ has only one singularity and $g(z)=1$, then the corresponding Toeplitz determinant can be explicitly calculated.
\begin{lemma}[\cite{BS}, Corollary 2.3]\label{lemma1,1}\label{l2.2}
For $\varphi_{\alpha,\beta}=(-z)^\beta |1-z|^{2\alpha},  ~~\mathrm{Re}~ \alpha >-1/2$,
\[ D_n(\varphi_{\alpha,\beta})=\frac{G(1+\alpha+\beta)G(1+\alpha-\beta)}{G(1+2\alpha)}\cdot \frac{G(1+n)G(1+n+2\alpha)}{G(1+n+\alpha+\beta)G(1+n+\alpha-\beta)} \]
for all $n\geq 1$ if $\mathrm{Re}~ \alpha >-1/2$ and neither $\alpha+\beta$ nor $\alpha-\beta$ is a negative integer, whereas $D_n(\varphi_{\alpha,\beta})=0$ for all $n\geq 1$ if $\mathrm{Re}~ \alpha >-1/2$ and either $\alpha+\beta$ or $\alpha-\beta$ is a negative integer.  
\end{lemma}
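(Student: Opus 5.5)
The plan is to obtain the Fourier coefficients of $\varphi_{\alpha,\beta}$ in closed form, recognize $T_n\varphi_{\alpha,\beta}$ as a matrix with a Gamma-function (Cauchy-type) structure, evaluate its determinant through a ratio recursion, and then telescope that recursion using the functional equation $G(z+1)=\Gamma(z)G(z)$ of the Barnes $G$-function.

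\textbf{Step 1 (Fourier coefficients).} Write, on $\mathbb{T}$ with the branch fixed by $\arg(-z)\in(-\pi,\pi]$,
\[
\varphi_{\alpha,\beta}(z)=(1-z)^{\alpha+\beta}(1-\bar z)^{\alpha-\beta}.
\]
Expanding both factors by the binomial series and computing the convolution of the coefficients, or equivalently evaluating a Beta integral in $\theta$, I expect to obtain
\[
\hat\varphi_{\alpha,\beta}(k)=\frac{\Gamma(1+2\alpha)}{\Gamma(1+\alpha+\beta-k)\,\Gamma(1+\alpha-\beta+k)},\qquad k\in\mathbb Z,
\]
possibly up to a harmless factor $(-1)^k$, which does not affect any determinant. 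Since the series only converge conditionally when $\mathrm{Re}\,\alpha$ is small, I would first prove the formula for $\mathrm{Re}\,\alpha$ large. The general case $\mathrm{Re}\,\alpha>-1/2$ then follows by analytic continuation in $(\alpha,\beta)$: both sides are analytic there, the left side because $\varphi_{\alpha,\beta}\in L^1$ locally uniformly.

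\textbf{Step 2 (the vanishing case).} This part is immediate from Step 1. Suppose $\alpha+\beta=-m$ with $m\ge 1$ an integer. Then $1/\Gamma(1-m-k)=0$ for every $k\ge 0$, so every entry $\hat\varphi(i-j)$ with $i\ge j$ vanishes. Hence $T_n\varphi$ is strictly upper triangular and $D_n=0$. The case $\alpha-\beta=-m$ is symmetric: it gives a strictly lower triangular matrix.

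\textbf{Step 3 (main case).} By analytic continuation again, it suffices to treat generic real parameters, in which case $\varphi>0$ and $T_n\varphi$ is positive definite. I then use the Szegő recursion for orthogonal polynomials on the unit circle with respect to the weight $\varphi\,dm_1$:
\[
\frac{D_{n+1}D_{n-1}}{D_n^2}=1-\rho_n\tilde\rho_n,
\]
where $\rho_n,\tilde\rho_n$ are the Verblunsky coefficients of this weight and its reflection. For this Jacobi-type weight the monic orthogonal polynomials are hypergeometric ${}_2F_1$ polynomials. Their constant terms are therefore explicit, and I expect
\[
\rho_n=-\frac{\alpha+\beta}{n+1+\alpha-\beta},\qquad \tilde\rho_n=-\frac{\alpha-\beta}{n+1+\alpha+\beta}.
\]
These satisfy
\[
1-\rho_n\tilde\rho_n=\frac{(n+1)(n+1+2\alpha)}{(n+1+\alpha+\beta)(n+1+\alpha-\beta)}.
\]
To verify that the proposed polynomials are orthogonal, I would check the relations $\langle P_n,z^j\rangle=0$ for $0\le j<n$ directly from the coefficient formula of Step 1, using the Chu--Vandermonde summation.

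\textbf{Step 4 (telescoping).} Write $D_n=D_1\prod_{k=1}^{n-1}\bigl(D_{k+1}/D_k\bigr)$ and apply the ratio formula of Step 3 twice. Using $\Gamma(z+1)=z\Gamma(z)$ and $G(z+1)=\Gamma(z)G(z)$, the products collapse to
\[
\frac{G(1+n)\,G(1+n+2\alpha)}{G(1+n+\alpha+\beta)\,G(1+n+\alpha-\beta)}
\]
times a constant. The constant is fixed by the initial values $D_0=1$ and $D_1=\hat\varphi(0)=\Gamma(1+2\alpha)/\bigl(\Gamma(1+\alpha+\beta)\Gamma(1+\alpha-\beta)\bigr)$. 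This yields the prefactor $G(1+\alpha+\beta)G(1+\alpha-\beta)/G(1+2\alpha)$, as can be checked at $n=0$, where the whole expression must equal $1$.

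\textbf{Main obstacle.} The delicate part is Step 3, establishing the closed form of the Verblunsky coefficients, that is, the orthogonality of the hypergeometric polynomials. If that verification becomes unwieldy, I would instead evaluate $\det\bigl(1/(\Gamma(a-i+j)\Gamma(b+i-j))\bigr)$ directly. Factoring out row and column Gamma factors turns the entries into polynomials in $i$ and $j$, and the resulting determinant can be evaluated by a Krattenthaler-type lemma. That route gives the same product of Gamma ratios, which then telescopes into $G$-functions exactly as in Step 4.
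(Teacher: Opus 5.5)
The paper does not prove this lemma; it is quoted from [BS], so your argument has to stand on its own. Steps 1 and 2 are correct: the factor $(-1)^k$ is harmless, and the triangularity argument settles the vanishing case. The overall plan is viable, and so is your Krattenthaler fallback. Step 3, however, contains two errors, and the first makes Step 4 produce the wrong determinant.

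First, your recursion is shifted by one index. With $D_n$ the $n\times n$ determinant, $D_{n+1}/D_n$ is the squared norm of the monic orthogonal polynomial of degree $n$. The Szeg\H{o} recursion therefore gives $D_{n+2}D_n/D_{n+1}^2=1-\rho_n\tilde\rho_n$ when $\rho_n,\tilde\rho_n$ come from the constant terms of the degree-$(n+1)$ polynomials, and that is what your stated values are. At $n=0$ they give $\rho_0\tilde\rho_0=\hat\varphi(1)\hat\varphi(-1)/\hat\varphi(0)^2=1-D_2D_0/D_1^2$.

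Concretely, take $\alpha=1$, $\beta=0$, so $\varphi=2-z-\bar z$ and $D_n=n+1$. Your formula at $n=1$ gives $1-\rho_1\tilde\rho_1=8/9=D_3D_1/D_2^2$, whereas $D_2D_0/D_1^2=3/4$. Telescoping your relation as written yields $D_2=32/9$ instead of $3$. What you need is
\[
\frac{D_{n+1}D_{n-1}}{D_n^2}=1-\rho_{n-1}\tilde\rho_{n-1}=\frac{n(n+2\alpha)}{(n+\alpha+\beta)(n+\alpha-\beta)},\qquad n\ge 1.
\]
With $D_1=\hat\varphi(0)$ this gives $D_{n+1}/D_n=\Gamma(1+n)\Gamma(1+n+2\alpha)/\bigl(\Gamma(1+n+\alpha+\beta)\Gamma(1+n+\alpha-\beta)\bigr)$. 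The $G$-formula then follows from $\prod_{m=0}^{n-1}\Gamma(1+m+x)=G(1+n+x)/G(1+x)$.

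Second, for real $\beta\neq 0$ the symbol is not real-valued, since $(-z)^\beta=e^{i\beta(\theta-\pi)}$ for $z=e^{i\theta}$, $\theta\in(0,2\pi)$. Then $T_n\varphi$ is not even Hermitian, so ``generic real parameters'' do not put you in the positive-weight setting. The positive weights are those with $\alpha>-1/2$ real and $\beta\in i\mathbb{R}$. The identity theorem still works on that set: continue in $\alpha$ with $\beta$ fixed, then in $\beta$.

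Alternatively, you can bypass orthogonal polynomials, positivity and the Chu--Vandermonde verification entirely. Apply the Desnanot--Jacobi identity to $T_{n+2}\varphi$. Deleting the first row and last column gives $T_{n+1}(\bar z\varphi)$, and deleting the last row and first column gives $T_{n+1}(z\varphi)$ (as in Remark~\ref{r3.1}). This yields, for every symbol,
\[
D_{n+2}(\varphi)D_n(\varphi)=D_{n+1}(\varphi)^2-D_{n+1}(z\varphi)D_{n+1}(\bar z\varphi).
\]
With your branch, $z\varphi_{\alpha,\beta}=-\varphi_{\alpha,\beta+1}$ and $\bar z\varphi_{\alpha,\beta}=-\varphi_{\alpha,\beta-1}$. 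The right-hand side $E_n(\beta)$ of the lemma satisfies
\[
\frac{E_n(\beta+1)E_n(\beta-1)}{E_n(\beta)^2}=\frac{\alpha^2-\beta^2}{(n+\alpha+\beta)(n+\alpha-\beta)}=1-\frac{E_{n+1}(\beta)E_{n-1}(\beta)}{E_n(\beta)^2}.
\]
So the formula follows by induction on $n$, simultaneously for all $\beta$. Divide by $E_n(\beta)$ off the set where $\alpha\pm\beta$ is a negative integer and extend by continuity. From Step 1 you then need only $D_1=\hat\varphi(0)$.
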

Here $G(\cdot)$ stands for the Barnes $G$-function \cite{WW}, which is an entire analytic function defined by
\[ G(1+z)=(2\pi)^{z/2}e^{-(z+1)z/2-\gamma_E z^2/2} \prod_{n=1}^\infty  \left\{ (1+\frac{z}{n})^n e^{-z+z^2/2n} \right\} \] 
with $\gamma_E$ being the Euler's constant.

It is known that 
\[ G(1)=1,~~ G(z+1)=\Gamma(z)G(z),~~~z\in \mathbb{C}\]
where $\Gamma(z)$ is the gamma function. For positive integer values of $z$, 
\[ G(n+1)=\prod_{j=0}^{n-1} j!.  \]

\section{The numerical invariants for submodule $[(z-w)^2]$ }\label{s3}
The subspaces $M\ominus zM$ and $M\ominus wM$ are sometimes called defect spaces for submodule $M$. They capture much information about $M$. The orthonormal basis for the defect spaces are impossible to compute except for a few submodules. However, the homogeneous submodule has a nice orthogonal decomposition, and that enables one to determine the orthonormal basis.

Let $p$ be a homogeneous polynomial of degree $k$ with
\[p=\sum_{j=0}^{k}c_jz^jw^{k-j},\]
and
\begin{equation*}
 A^n= \left(
    \begin{array}{ccccc}
      \|p\|^2 & \overline{\langle pw,pz \rangle} & \overline{\langle pw^2,pz^2 \rangle} & \cdots & \overline{\langle pw^n,pz^n \rangle} \\
      \langle pw,pz \rangle & \|p\|^2 & \overline{\langle pw,pz \rangle} & \cdots & \overline{\langle pw^{n-1},pz^{n-1} \rangle} \\
      \langle pw^2,pz^2 \rangle & \langle pw,pz \rangle & \|p\|^2 & \cdots & \overline{\langle pw^{n-2},pz^{n-2} \rangle} \\
      \vdots & \vdots & \vdots & \ddots & \vdots \\
      \langle pw^n,pz^n \rangle & \langle pw^{n-1},pz^{n-1} \rangle & \langle pw^{n-2},pz^{n-2} \rangle & \cdots & \|p\|^2 \\
    \end{array}
  \right).
\end{equation*}
Note that $A^n$ is an $(n+1) \times (n+1)$ Toeplitz matrix, for convenience, in this paper, we set $D_n=\textrm{det} \,\, A^{n-1}$ for $ n\geq 1$ and $D_0=1$. Furthermore, we write $A^n$ as $(a_{i,j})_{i,j=0}^n$, where $a_{i,j}=\langle pw^{i-j},pz^{i-j} \rangle, \,\,\,i,j=0,1,\cdots,n$, and denote its cofactor matrix by $(A_{i,j}^n)_{i,j=0}^n$.
\begin{remark}\label{r3.1}
  Indeed, $A^n$ is an $(n+1) \times (n+1)$ Toeplitz matrix with symbol $|p(z,1)|^2$, and the submatrix $M_{0,n}$ of $A^n$ given by removing the $0-$th row and $n-$th column in $A^n$ is an $n \times n$ Toeplitz matrix with symbol $\bar{z}|p(z,1)|^2$.
\end{remark}

The following conclusions are taken from \cite{Fat}.
\begin{lemma}[\cite{Fat}, Proposition 2.1]\label{l3.1}
Let $[p]$ be a homogeneous submodule. Then $\{\phi_n:n\geq 0\}$ is an orthonormal basis for $M\ominus zM$ and $\{\psi_n:n\geq 0\}$ is an orthonormal basis for $M\ominus wM$, where
\[\phi_0=\psi_0=\frac{p}{\|p\|},\,\,\,\,\phi_n=\frac{\sum_{j=0}^{n}pA_{0,j}^nz^jw^{n-j}}{\sqrt{D_{n+1}D_n}},\,\,\,\,\psi_n=\frac{\sum_{j=0}^npA_{n,j}^nz^jw^{n-j}}{\sqrt{D_{n+1}D_n}},\,\,\,\,n\geq1.\]
\end{lemma}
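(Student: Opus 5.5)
The plan is to use the grading of $H^2(\mathbb D^2)$ by total degree. Let $k=\deg p$ and let $\mathcal P_m$ denote the homogeneous polynomials of degree $m$. Since $p$ is homogeneous, $M=[p]$ splits as an orthogonal direct sum
\[
M=\bigoplus_{n\ge 0} M_n,\qquad M_n=p\,\mathcal P_n ,
\]
where $M_n$ consists of homogeneous polynomials of degree $k+n$. This holds because the closure of $p\,\mathbb C[z,w]$ is compatible with the orthogonal decomposition of $H^2(\mathbb D^2)$ into homogeneous components, and the projection onto each component maps $M$ into $M$ by the circle action $f(z,w)\mapsto f(e^{i\theta}z,e^{i\theta}w)$. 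Multiplication by $z$ maps $M_{n-1}$ into $M_n$, so $zM=\bigoplus_n zM_{n-1}$ and
\[
M\ominus zM=\bigoplus_{n\ge0}\bigl(M_n\ominus zM_{n-1}\bigr),
\]
and similarly for $w$. Since $p\neq0$, the $n+1$ vectors $e_j=pz^jw^{n-j}$, $j=0,\dots,n$, are linearly independent and span $M_n$. Thus $\dim M_n=n+1$ and $\dim zM_{n-1}=n$, so each summand is one-dimensional. Hence it suffices to exhibit, for every $n$, a unit vector in $M_n$ orthogonal to $zM_{n-1}=\mathrm{span}\{e_1,\dots,e_n\}$; for $\psi_n$ the orthogonality is instead to $wM_{n-1}=\mathrm{span}\{e_0,\dots,e_{n-1}\}$. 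The case $n=0$ gives $\phi_0=\psi_0=p/\|p\|$ immediately.

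Next I compute the Gram matrix of $e_0,\dots,e_n$. Multiplication by a monomial is an isometry on $H^2(\mathbb D^2)$. For $i\ge j$ I cancel the common factor $z^jw^{n-i}$, which gives
\[
\langle e_j,e_i\rangle=\langle pw^{i-j},pz^{i-j}\rangle=a_{i,j},
\]
and the case $i<j$ follows by conjugation. So the Gram matrix is exactly the Toeplitz matrix $A^n$, which is Hermitian and positive definite because the $e_j$ are independent. In particular $D_{n+1}=\det A^n>0$, and by Remark~\ref{r3.1} the principal minors $A^n_{0,0}$ and $A^n_{n,n}$ both equal $D_n$.

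Now put $f=\sum_j A^n_{0,j}e_j$. Then
\[
\langle f,e_i\rangle=\sum_j A^n_{0,j}a_{i,j}=\delta_{i0}\,D_{n+1},
\]
by the cofactor (Laplace) expansion: row $i$ of $A^n$ paired with the cofactors of row $0$ gives the determinant when $i=0$ and vanishes otherwise (alien cofactors). So $f\perp e_1,\dots,e_n$, and
\[
\|f\|^2=\sum_i \overline{A^n_{0,i}}\,\langle f,e_i\rangle=\overline{A^n_{0,0}}\,D_{n+1}=D_nD_{n+1}.
\]
Therefore $\phi_n=f/\sqrt{D_{n+1}D_n}$. The same argument with row $n$ gives $\langle\sum_j A^n_{n,j}e_j,e_i\rangle=\delta_{in}D_{n+1}$, which yields $\psi_n$ with the same norm, since $A^n_{n,n}=D_n$.

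The main obstacle I expect is bookkeeping rather than substance. One must check that the convention $a_{i,j}=\langle pw^{i-j},pz^{i-j}\rangle$ places the conjugates so that $\sum_j A^n_{0,j}a_{i,j}$, and not its conjugate or the column expansion, is what appears in $\langle f,e_i\rangle$. If the indices turn out transposed, I would use the Hermitian symmetry of $A^n$, which gives $A^n_{i,j}=\overline{A^n_{j,i}}$, and the column form of the cofactor identity to fix it.
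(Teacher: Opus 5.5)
Your proof is correct and complete. The graded splitting $M=\bigoplus_n p\mathcal P_n$ reduces the problem to the one-dimensional spaces $M_n\ominus zM_{n-1}$ and $M_n\ominus wM_{n-1}$, and the Gram matrix of the vectors $pz^jw^{n-j}$ is exactly $A^n$ with $a_{i,j}=\langle e_j,e_i\rangle$. So the cofactor-of-entry convention already lines up and no transposition fix is needed. The row-$0$ (resp.\ row-$n$) cofactor expansion then gives both the orthogonality and the norm $D_nD_{n+1}$.

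The paper gives no proof of its own here; it quotes the lemma from \cite{Fat}. Your argument is the standard one behind that result, and its key step is the same alien-cofactor identity the paper later uses in the proof of Theorem~\ref{t3.3}, namely $X_i=\sum_j a_{i+2,j}A^n_{0,j}=0$.
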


\begin{lemma}[\cite{Fat}, Corollary 2.3]\label{l3.2}
For the homogeneous submodule $[p]$, we have
\[\Sigma_0=\sum_{n=0}^{\infty} \left|\frac{A_{0,n}^n}{D_n} \right|^2.\]
\end{lemma}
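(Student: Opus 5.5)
The plan is to start from Yang's formula $\Sigma_0=\sum_{n\ge0}|\langle\phi_n,\psi_n\rangle|^2$ (\cite{Ya3}, Lemma 3.2) and to insert the explicit orthonormal bases of Lemma~\ref{l3.1}. It then suffices to prove, for each $n$, that
\[
|\langle \phi_n,\psi_n\rangle| = \frac{|A^n_{0,n}|}{D_n}.
\]
For $n=0$ this holds directly: $\phi_0=\psi_0=p/\|p\|$, so the inner product is $1$. This matches $A^0_{0,0}/D_0=1$, since the cofactor of a $1\times1$ matrix is the empty determinant $1$.

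For $n\ge1$ I will expand
\[
\langle\phi_n,\psi_n\rangle=\frac{1}{D_{n+1}D_n}\sum_{j,l=0}^n A^n_{0,j}\,\overline{A^n_{n,l}}\,\langle p z^j w^{n-j},\,p z^l w^{n-l}\rangle .
\]
The key structural step is to identify the Gram matrix $G=(\langle pz^jw^{n-j},pz^lw^{n-l}\rangle)_{j,l}$ with $A^n$, or with its transpose or conjugate depending on conventions. For $j\ge l$, divide out the common monomial $z^lw^{n-j}$; this is legitimate because multiplication by monomials is isometric on $H^2(\mathbb D^2)$. That leaves $\langle p z^{j-l}, p w^{j-l}\rangle=\overline{\langle pw^{j-l},pz^{j-l}\rangle}$, and the case $j<l$ is symmetric. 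So $G$ is the Hermitian Toeplitz matrix built from the same entries $a_{i,j}$. I will carefully fix which of $A^n$, $(A^n)^T$ it equals.

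Next I apply the cofactor (Laplace) identity $\sum_j a_{i,j}A^n_{k,j}=\delta_{ik}\det A^n=\delta_{ik}D_{n+1}$, in the appropriate row/column form. This collapses the inner sum to $\sum_j A^n_{0,j}G_{j,l}=D_{n+1}\delta_{0,l}$. Hence
\[
\langle\phi_n,\psi_n\rangle=\frac{D_{n+1}\overline{A^n_{n,0}}}{D_{n+1}D_n}=\frac{\overline{A^n_{n,0}}}{D_n}.
\]
Finally, because $A^n$ is Hermitian, its cofactors satisfy $A^n_{n,0}=\overline{A^n_{0,n}}$, so $|A^n_{n,0}|=|A^n_{0,n}|$. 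Summing the squares over $n$ then gives the stated formula for $\Sigma_0$.

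The main obstacle is bookkeeping rather than conceptual. The conjugations and index orders must line up: the Gram matrix against $A^n$, and the cofactor identity in rows versus columns. Otherwise one obtains a product of a cofactor row with the wrong matrix and no cancellation occurs. I would first check the case $n=1$ by hand to pin down the convention, and then run the general argument. If $G$ turns out to be $\overline{A^n}=(A^n)^T$, I would use the column-expansion version of the cofactor identity instead; because the final answer involves only $|A^n_{0,n}|$, either convention yields the result.
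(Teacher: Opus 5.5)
Your proposal is correct and follows the route the paper itself uses. The paper only cites this lemma, but after Theorem~\ref{t3.3} it records exactly your identity $\langle\phi_n,\psi_n\rangle=A^n_{n,0}/D_n$, obtained "by a similar process", i.e.\ the same Gram-matrix/cofactor collapse. Your conventions do line up: $G=(A^n)^T$ gives $\sum_j A^n_{0,j}G_{j,l}=\sum_j a_{l,j}A^n_{0,j}=\delta_{l,0}D_{n+1}$, and Hermitian symmetry gives $|A^n_{n,0}|=|A^n_{0,n}|$.

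One point is worth stating explicitly. In general $\Sigma_0=\|P_{M\ominus wM}P_{M\ominus zM}\|_{H.S.}^2=\sum_{n,m}|\langle\phi_n,\psi_m\rangle|^2$ is a double sum. It reduces to the diagonal sum you start from only because $\phi_n$ and $\psi_m$ are homogeneous of different degrees when $n\ne m$.
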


\begin{lemma}[\cite{Fat}, Theorem 3.7]\label{l3.3}
For the homogeneous submodule $[p]$, the core operator has eigenvalues
\[0,1,\pm \left(1-\frac{(|D_n|^2-|A_{0,n}^n|^2)^2}{D_{n-1}D_n^2D_{n+1}} \right)^{1/2},\,\,\,\,n \geq 1.\]
\end{lemma}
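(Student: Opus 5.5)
The plan is to diagonalize $C$ directly, using the grading by total degree together with the bases of Lemma~\ref{l3.1}. First, since $I-R_1R_1^*$ is the projection $E$ onto $M\ominus zM$, the identity $C=I-R_1R_1^*-R_2R_2^*+R_1R_2R_1^*R_2^*$ (valid on $M$, with $C=0$ on $M^\perp$) can be rewritten as $C=E-R_2ER_2^*$. Hence
\[
C=\sum_{n\ge 0}\bigl(\phi_n\otimes\phi_n - w\phi_n\otimes w\phi_n\bigr).
\]

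Next we use the homogeneity of $p$. Write $k=\deg p$. Then $\phi_n$ is homogeneous of degree $k+n$, and so is $w\phi_{n-1}$. The operator $C$ therefore splits along the orthogonal decomposition of $H^2(\mathbb D^2)$ into homogeneous components.
\begin{itemize}
\item On degree $k$, $C$ acts as $\phi_0\otimes\phi_0$, which contributes the eigenvalue $1$.
\item On degree $k+n$ with $n\ge 1$, $C$ acts as $u\otimes u-v\otimes v$, where $u=\phi_n$ and $v=w\phi_{n-1}$ are unit vectors.
\item Everywhere else $C$ vanishes, which contributes the eigenvalue $0$.
\end{itemize}
The operator $u\otimes u-v\otimes v$ has rank at most two, and on $\mathrm{span}\{u,v\}$ it has trace $0$ and determinant $-(1-|\langle u,v\rangle|^2)$. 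Its nonzero eigenvalues are therefore $\pm(1-|\langle\phi_n,w\phi_{n-1}\rangle|^2)^{1/2}$.

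It then remains to compute $\langle \phi_n,w\phi_{n-1}\rangle$ from the explicit formulas in Lemma~\ref{l3.1}.
\begin{itemize}
\item Since $\phi_n\in M\ominus zM$, it is orthogonal to $pz^jw^{n-j}$ for every $j\ge1$. So in $w\phi_{n-1}=\sum_j pA^{n-1}_{0,j}z^jw^{n-j}/\sqrt{D_nD_{n-1}}$ only the $j=0$ term survives, with coefficient $A^{n-1}_{0,0}=D_{n-1}$. This holds because deleting the first row and column of a Toeplitz matrix leaves a Toeplitz matrix of the same type.
\item The pairing $\langle \phi_n,pw^n\rangle$ is, up to conjugation, the Laplace expansion of $\det A^n$ along row $0$, so it equals $D_{n+1}/\sqrt{D_{n+1}D_n}$.
\end{itemize}
Combining the two gives
\[
|\langle\phi_n,w\phi_{n-1}\rangle|^2=\frac{D_{n-1}D_{n+1}}{D_n^2}.
\]

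The final step, which I expect to be the only non-routine one, is to bring this into the stated form. For that I would invoke the Desnanot--Jacobi (Sylvester) identity for the $(n+1)\times(n+1)$ matrix $A^n$. The minors obtained by deleting row $0$ and column $0$, or row $n$ and column $n$, both equal $D_n$. The minors obtained by deleting row $0$ and column $n$, or row $n$ and column $0$, are $A^n_{0,n}$ and its conjugate, because $A^n$ is Hermitian. The central minor is $D_{n-1}$. The identity then yields
\[
D_{n+1}D_{n-1}=D_n^2-|A^n_{0,n}|^2 .
\]
Since $D_n>0$, we have $|D_n|^2=D_n^2$, and so
\[
\frac{D_{n-1}D_{n+1}}{D_n^2}=\frac{(|D_n|^2-|A^n_{0,n}|^2)^2}{D_{n-1}D_n^2D_{n+1}},
\]
which gives the asserted eigenvalues. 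Care is needed with conjugation conventions for the cofactors, but only moduli enter the final formula.
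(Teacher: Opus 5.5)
Your proof is correct. Note that the paper gives no argument of its own here: the statement is imported from \cite{Fat} and used as a black box in Theorem~\ref{t3.6}. Your route is self-contained and needs only Lemma~\ref{l3.1}.

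\begin{itemize}
\item The identity $C=E-R_2ER_2^*$ follows from $R_2R_1R_1^*R_2^*=R_1R_2R_1^*R_2^*$.
\item The grading by total degree reduces $C$ to the rank-two blocks $\phi_n\otimes\phi_n-w\phi_{n-1}\otimes w\phi_{n-1}$.
\item With the convention of Lemma~\ref{l3.1} one has exactly $\langle pz^jw^{n-j},pz^iw^{n-i}\rangle=a_{i,j}$. So no conjugation enters, and $\langle\phi_n,pw^n\rangle=D_{n+1}/\sqrt{D_{n+1}D_n}$ is literally the row-$0$ Laplace expansion. For $i\ge1$ the same sum is the alien-cofactor identity that gives $\phi_n\perp zM$.
\item The Desnanot--Jacobi step is also fine, including $n=1$, where the central minor is empty and equals $D_0=1$.
\end{itemize}

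Your route also gives a simpler formula than the cited one. The identity $D_{n+1}D_{n-1}=D_n^2-|A^n_{0,n}|^2$ yields
\[
1-\frac{D_{n-1}D_{n+1}}{D_n^2}=\frac{|A^n_{0,n}|^2}{D_n^2},
\]
so the nonzero eigenvalues for $n\ge1$ are simply $\pm|A^n_{0,n}|/D_n$, the square roots of the terms in Lemma~\ref{l3.2}.

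In this form Theorem~\ref{t3.6} follows immediately from Theorem~\ref{t3.1}, since $A^n_{0,n}/D_n=2/(n+2)$. The spectrum is also visibly consistent with $\|C\|_{H.S.}^2=\Sigma_0+\Sigma_1=2\Sigma_0-1$, because $\Sigma_0=1+\sum_{n\ge1}|A^n_{0,n}/D_n|^2$.
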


Next, we consider the homogeneous polynomial $p=(z-w)^2$, it is easy to check that 
\[ \|p\|^2=6,\,\,\,\,\langle pw,pz \rangle=-4,\,\,\,\,\langle pw^2,pz^2 \rangle=1,\,\,\,\,\langle pw^k,pz^k \rangle=0,\,\,k \geq 3,\]
and
\begin{equation*}
 A^n= \left(
    \begin{array}{cccccccc}
      6 & -4 & 1 & 0 & \cdots & 0 & 0 & 0 \\
      -4 & 6 & -4 & 1 & \cdots & 0 & 0 & 0 \\
      1 & -4 & 6 & -4 & \cdots & 0 & 0 & 0 \\
      0 & 1 & -4 & 6 & \cdots & 0 & 0 & 0 \\
      \vdots & \vdots & \vdots & \vdots & \ddots & \vdots & \vdots & \vdots\\
      0 & 0 & 0 & 0 & \cdots & 6 & -4 & 1 \\
      0 & 0 & 0 & 0 & \cdots & -4 & 6 & -4 \\
      0 & 0 & 0 & 0 & \cdots & 1 & -4 & 6 \\
    \end{array}
  \right)_{(n+1) \times (n+1)}.
\end{equation*}

Firstly, by Lemma \ref{l2.2}, we have the following useful conclusion:

\begin{theorem}\label{t3.1}
For the homogeneous submodule $[(z-w)^2]$, we have 
\[D_n=\frac{(n+1)(n+2)^2(n+3)}{12},\,\,\,\,\,\,\,n=0,1,2,\cdots,\]
and 
\[A_{0,n}^n=\frac{(n+1)(n+2)(n+3)}{6},\,\,\,\,\,\,\,n=0,1,2,\cdots.\]
\end{theorem}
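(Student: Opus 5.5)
We will deduce both formulas from Lemma \ref{l2.2} together with Remark \ref{r3.1}. The symbol of $A^n$ is $|p(z,1)|^2=|z-1|^4=|1-z|^{2\cdot 2}$, so $A^{n-1}$ is the $n\times n$ Toeplitz matrix with symbol $\varphi_{2,0}$. Thus $D_n=D_n(\varphi_{2,0})$. Indeed, the Fourier coefficients of $|1-z|^4=(2-z-\bar z)^2$ are $6,-4,1$, which matches the entries computed above.

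Applying Lemma \ref{l2.2} with $\alpha=2$, $\beta=0$ gives
\[ D_n=\frac{G(3)^2}{G(5)}\cdot\frac{G(1+n)G(5+n)}{G(3+n)^2}. \]
Next we use $G(z+1)=\Gamma(z)G(z)$. We have $G(5+n)=(n+3)!\,(n+2)!\,G(3+n)$ and $G(3+n)=(n+1)!\,n!\,G(1+n)$. The second factor is therefore $\frac{(n+3)!(n+2)!}{(n+1)!\,n!}=(n+3)(n+2)^2(n+1)$. Also $G(3)=1$ and $G(5)=1!\,2!\,3!=12$. This yields $D_n=\frac{(n+1)(n+2)^2(n+3)}{12}$ for $n\ge1$, and the value at $n=0$ agrees with $D_0=1$.

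For the cofactor $A^n_{0,n}=(-1)^{n}\det M_{0,n}$, Remark \ref{r3.1} says $M_{0,n}$ is the $n\times n$ Toeplitz matrix with symbol $\bar z|1-z|^4$. We write this symbol in the form $(-z)^\beta|1-z|^{2\alpha}$. The identity $|1-z|^2=-\bar z(1-z)^2$ gives $\bar z|1-z|^4=-(-\bar z)\cdot\ldots$; rather than chase the exact branch, we match it to $\varphi_{\alpha,\beta}$ with $\alpha=2$, $\beta=\pm1$, up to a sign $(-1)$ that multiplies every entry. That sign contributes $(-1)^n$ to the determinant, which should cancel the cofactor sign.

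Indeed, $\bar z=-(-z)^{-1}$, so $\bar z|1-z|^4=-\varphi_{2,-1}$. Hence $\det M_{0,n}=(-1)^nD_n(\varphi_{2,-1})$ and $A^n_{0,n}=D_n(\varphi_{2,-1})$. We must check the row/column orientation convention, $\hat\varphi(i-j)$ versus $\hat\varphi(j-i)$; since the formula is symmetric in $\beta\mapsto-\beta$, this does not matter.

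Lemma \ref{l2.2} with $\alpha=2,\beta=-1$ gives
\[ \frac{G(2)G(4)}{G(5)}\cdot\frac{G(1+n)G(5+n)}{G(2+n)G(4+n)}. \]
Here $G(2)=1$, $G(4)=2$, $G(5)=12$, and the $n$-dependent ratio simplifies to $\frac{(n+3)!}{n!}=(n+1)(n+2)(n+3)$. The result is $\frac{(n+1)(n+2)(n+3)}{6}$. The case $n=0$ is checked directly, since $A^0_{0,0}=1$.

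The main obstacle is the bookkeeping in this cofactor step: getting the sign of the cofactor, the sign from $\bar z=-(-z)^{-1}$, and the indexing (an $n\times n$ matrix corresponds to $D_n$, not $D_{n+1}$) all consistent. I would verify them against small cases. For $n=1$ the formula should give $A^1_{0,1}=4$, and indeed the cofactor is $-(-4)=4$. For $n=2$ it should give $10$, and indeed $\det\begin{pmatrix}-4&6\\1&-4\end{pmatrix}=10$.
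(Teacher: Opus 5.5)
Your proposal is correct and follows essentially the same route as the paper: use Remark~\ref{r3.1} to identify $A^{n-1}$ and $M_{0,n}$ as Toeplitz matrices with symbols $|1-z|^4$ and $\bar z|1-z|^4$, then evaluate with Lemma~\ref{l2.2} at $(\alpha,\beta)=(2,0)$ and $(2,\pm1)$. Your bookkeeping is cleaner than the paper's: the paper writes $A_{0,n}^n=D_{n-1}((-z)|z-1|^4)$, whereas the $n\times n$ determinant $D_n$ you use, with the two factors $(-1)^n$ cancelling, is what actually gives $\frac{(n+1)(n+2)(n+3)}{6}$.
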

\begin{proof}
For $p=(z-w)^2$, by Remark \ref{r3.1}, we have
\[ D_n=D_n(|z-1|^4), \,\,\,\,\, A_{0,n}^n=D_{n-1}((-z)\cdot |z-1|^4). \]
Then by Lemma \ref{l2.2},
\begin{align*}
D_n&=\frac{G(3)G(3)}{G(5)}\cdot \frac{G(n+1)G(n+5)}{G(n+3)G(n+3)}\\
   &=\frac{1}{12}\cdot \frac{\prod_{j=0}^{n-1} j! \cdot \prod_{j=0}^{n+3} j!}{\prod_{j=0}^{n+1} j! \cdot \prod_{j=0}^{n+1} j!}\\
   &=\frac{(n+1)(n+2)^2(n+3)}{12}.
\end{align*}
Moreover, a similar calculation gives the expression of $A_{0,n}^n$.
\end{proof}

From the above two formula, we obtain $\Sigma_0$ and $\Sigma_1$ for submodule $[(z-w)^2]$.
\begin{theorem}\label{t3.2}
For the homogeneous submodule $[(z-w)^2]$, we have
\[\Sigma_0=\frac{2}{3} \pi^2-4,\,\,\,\,\,\, \Sigma_1=\frac{2}{3} \pi^2-5.\]
\end{theorem}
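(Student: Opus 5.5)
Plan: insert the closed forms of Theorem \ref{t3.1} into Lemma \ref{l3.2}, sum the resulting rational series by partial fractions to get $\Sigma_0$, and then obtain $\Sigma_1$ from $\Sigma_0-\Sigma_1=1$. That identity requires $[(z-w)^2]$ to be Hilbert--Schmidt.

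\textbf{Computing $\Sigma_0$.} Theorem \ref{t3.1} gives
\[
\frac{A_{0,n}^n}{D_n}=\frac{(n+1)(n+2)(n+3)/6}{(n+1)(n+2)^2(n+3)/12}=\frac{2}{n+2}.
\]
At $n=0$ this equals $1$, consistent with $A^0_{0,0}=1$ and $D_0=1$. Hence
\[
\Sigma_0=\sum_{n=0}^\infty \frac{4}{(n+2)^2}=4\left(\frac{\pi^2}{6}-1\right)=\frac{2}{3}\pi^2-4.
\]
The only care needed is to check the index conventions between $D_n$ and $A^{n-1}$ in Theorem \ref{t3.1} and Lemma \ref{l3.2}, and that the $n=0$ term matches $\phi_0=\psi_0$.

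\textbf{Hilbert--Schmidt property.} This is the main obstacle. I would use Lemma \ref{l3.3}: $M$ is Hilbert--Schmidt exactly when $\|C\|_{H.S.}^2=\sum\lambda^2<\infty$ over the eigenvalues of $C$. Set $D_n=\frac{(n+1)(n+2)^2(n+3)}{12}$ and $A_{0,n}^n=\frac{(n+1)(n+2)(n+3)}{6}$. Then
\[
D_n^2-(A_{0,n}^n)^2=(A_{0,n}^n)^2\Big(\tfrac{(n+2)^2}{4}-1\Big)=(A_{0,n}^n)^2\,\frac{n(n+4)}{4}.
\]
The nonzero eigenvalue pairs contribute
\[
2\left(1-\frac{(A_{0,n}^n)^4 n^2(n+4)^2/16}{D_{n-1}D_n^2D_{n+1}}\right).
\]
Substituting the products gives
\[
\frac{(A_{0,n}^n)^4 n^2(n+4)^2/16}{D_{n-1}D_n^2D_{n+1}}=\frac{(n+1)^2(n+3)^2}{(n+2)^4}=\left(1-\frac{1}{(n+2)^2}\right)^2.
\]
So the $n$-th term is $2\big(\tfrac{2}{(n+2)^2}-\tfrac{1}{(n+2)^4}\big)$, which is summable. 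Adding the eigenvalue $1$ (with multiplicity one) gives a finite $\|C\|_{H.S.}^2$, so $M$ is Hilbert--Schmidt. The algebra above should be double-checked, in particular the multiplicity of the eigenvalue $1$ in Lemma \ref{l3.3}.

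\textbf{Concluding $\Sigma_1$.} Once $M$ is Hilbert--Schmidt, the Preliminaries give $\Sigma_0-\Sigma_1=1$, so $\Sigma_1=\frac{2}{3}\pi^2-5$.

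As a cross-check, with $\zeta(4)=\pi^4/90$ the eigenvalue sum is
\[
1+4\Big(\frac{\pi^2}{6}-1\Big)-2\Big(\frac{\pi^4}{90}-1\Big).
\]
Note $\Sigma_0+\Sigma_1=\frac{4}{3}\pi^2-9$. This would need to match $\|C\|_{H.S.}^2$ if $C$ were self-adjoint with these exact eigenvalues, but the two expressions do not agree. This mismatch suggests the squared eigenvalue sum is not $\|C\|_{H.S.}^2$ directly, since the core operator is only known to be self-adjoint up to the stated structure. Therefore I would rely on Lemma \ref{l3.3} only for \emph{finiteness}. As a fallback, I would prove the Hilbert--Schmidt property directly from Lemma \ref{l3.1}, or by citing the known fact that polynomial-generated (homogeneous) submodules are Hilbert--Schmidt.
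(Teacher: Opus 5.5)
Your computation of $\Sigma_0$ from Lemma~\ref{l3.2} and Theorem~\ref{t3.1}, and the deduction $\Sigma_1=\Sigma_0-1$, are exactly the paper's proof. The paper takes the Hilbert--Schmidt property of $[(z-w)^2]$ from the known fact that finitely generated polynomial submodules are Hilbert--Schmidt, which is your fallback. So your final argument is correct.

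The detour through Lemma~\ref{l3.3} contains an algebra error, and the conclusion you draw from it is false. The correct ratio is
\[
\frac{\bigl(D_n^2-(A_{0,n}^n)^2\bigr)^2}{D_{n-1}D_n^2D_{n+1}}=\frac{n(n+4)}{(n+2)^2},
\]
not $(n+1)^2(n+3)^2/(n+2)^4$. For example, at $n=1$ it is $(36-16)^2/(1\cdot 36\cdot 20)=5/9$, not $64/81$. Hence the nonzero eigenvalues are $\pm 2/(n+2)$, which is the paper's Theorem~\ref{t3.6}. Then
\[
1+\sum_{n\ge 1}2\cdot\frac{4}{(n+2)^2}=1+8\Big(\frac{\pi^2}{6}-\frac{5}{4}\Big)=\frac{4}{3}\pi^2-9=\Sigma_0+\Sigma_1 ,
\]
so there is no mismatch: your cross-check, done correctly, confirms the theorem. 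Note also that $m=n+2$ starts at $3$, not $2$, in your zeta sums. The explanation you gave for the supposed mismatch is wrong. Since $R_1,R_2$ commute, $C=I-R_1R_1^*-R_2R_2^*+(R_1R_2)(R_1R_2)^*$ on $M$ (and $0$ on $M^\perp$), which is self-adjoint.

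Even with the algebra fixed, Lemma~\ref{l3.3} as quoted does not give Hilbert--Schmidtness. That route needs $C$ to have an orthonormal eigenbasis with exactly these eigenvalues and known multiplicities, and the lemma states neither. So cite the Hilbert--Schmidt property of polynomial submodules directly, as the paper does, and delete the paragraph about the mismatch.
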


\begin{proof}
From Lemma \ref{l3.2}, we know
\[\Sigma_0=\sum_{n=0}^{\infty} \left|\frac{A_{0,n}^n}{D_n} \right|^2.\]
Therefore, using Theorem \ref{t3.1}, we get 
\begin{align*}
\Sigma_0&=\sum_{n=0}^{\infty} \left|\frac{(-1)^n \cdot (-1)^n\frac{(n+1)(n+2)(n+3)}{6}}{\frac{(n+1)(n+2)^2(n+3)}{12}} \right|^2\\
&=\sum_{n=0}^{\infty} \left(\frac{2}{n+2}\right)^2=4 \cdot \sum_{n=0}^{\infty} \frac{1}{(n+2)^2}=\frac{2}{3}\pi^2-4.
\end{align*}

Since $\Sigma_0 - \Sigma_1=1$, we have
\[\Sigma_1=\frac{2}{3}\pi^2-5.\]
\end{proof}

It is known that every finitely generated polynomial submodule is Hilbert-Schmidt \cite{Ya4}, so we have the following corollary.

\begin{corollary}\label{c3.1}
For the homogeneous submodule $[(z-w)^2]$,
\[\|C\|_{H.S.}^2=Tr(C^2)=\frac{4}{3}\pi^2-9.\]
\end{corollary}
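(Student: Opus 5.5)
The corollary follows directly from the relations between $\Sigma_0$, $\Sigma_1$ and the core operator recalled in Section~\ref{s2}, together with Theorem~\ref{t3.2}.

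First, $[(z-w)^2]$ is generated by a single polynomial, so it is a finitely generated polynomial submodule. By the result quoted from \cite{Ya4}, the core operator $C$ is therefore Hilbert--Schmidt. This is exactly the hypothesis under which the preliminaries give
\[
\|C\|_{H.S.}^2=\Sigma_0+\Sigma_1 .
\]

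Second, $C=I-R_1R_1^*-R_2R_2^*+R_1R_2R_1^*R_2^*$ is self-adjoint. Hence $\|C\|_{H.S.}^2=Tr(C^*C)=Tr(C^2)$, which is the first equality in the statement.

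Third, substituting the values from Theorem~\ref{t3.2} gives
\[
\|C\|_{H.S.}^2=\Big(\tfrac{2}{3}\pi^2-4\Big)+\Big(\tfrac{2}{3}\pi^2-5\Big)=\tfrac{4}{3}\pi^2-9 .
\]

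As an independent check, one could use Lemma~\ref{l3.3}, which lists the eigenvalues of $C$ as $0$, $1$ and $\pm\lambda_n$ with
\[
\lambda_n^2=1-\frac{(D_n^2-(A_{0,n}^n)^2)^2}{D_{n-1}D_n^2D_{n+1}} .
\]
Then $Tr(C^2)=1+2\sum_{n\ge1}\lambda_n^2$. Plugging in the closed forms from Theorem~\ref{t3.1}, each $\lambda_n^2$ becomes a rational function of $n$. Its partial-fraction decomposition should consist of $1/(n+2)^2$-type terms, which produce the $\pi^2$ contribution, and telescoping terms, which produce the rational constant.

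The only delicate point is justifying the identity $\|C\|_{H.S.}^2=\Sigma_0+\Sigma_1$. It is stated in the preliminaries for Hilbert--Schmidt submodules, and \cite{Ya4} supplies exactly that property, so I expect no real obstacle.
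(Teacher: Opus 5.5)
Your proposal is correct and is the same as the paper's proof: the Hilbert--Schmidt property from \cite{Ya4} gives $\|C\|_{H.S.}^2=Tr(C^2)=\Sigma_0+\Sigma_1$, and substituting the values from Theorem~\ref{t3.2} yields $\frac{4}{3}\pi^2-9$. Your optional eigenvalue check also works, since here $\lambda_n=\frac{2}{n+2}$ exactly and $1+8\sum_{n\ge1}(n+2)^{-2}=\frac{4}{3}\pi^2-9$, but it tacitly assumes each eigenvalue in Lemma~\ref{l3.3} has multiplicity one, which the lemma as quoted does not state.
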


\begin{proof}
Since
\[\|C\|_{H.S.}^2=Tr(C^2)=\Sigma_0 + \Sigma_1,\]
then we have
\[\|C\|_{H.S.}^2=Tr(C^2)=\frac{2}{3}\pi^2-4+\frac{2}{3}\pi^2-5=\frac{4}{3}\pi^2-9.\]
\end{proof}

\begin{remark}
To accurately compute the higher-order numerical invariants $\Sigma_k\ (k\geq 2)$, an explicit expression for $A_{k,n}^n$ is a critical prerequisite. However, for the general $A_{k,n}^n (k>0)$, we cannot establish a direct link to the Toeplitz determinant of any specific symbol--a tool that works well for low-order cases. The algebraic complexity introduced by higher-order $k$ renders conventional methods ineffective, highlighting the need for novel technical frameworks to resolve this computational bottleneck.
\end{remark}
 
To lay a foundation for higher-order cases, we first overcome the computational hurdle of  $\Sigma_2$, and then distill the core methods adaptable to all general $\Sigma_k$.

\begin{theorem}\label{t3.3}
For the homogeneous submodule $[(z-w)^2]$, we have
\[\Sigma_2=\sum_{n=0}^{\infty}|\langle w^2 \phi_n, z^2 \psi_n \rangle|^2=\frac{178}{3} \pi^2-585.\]
\end{theorem}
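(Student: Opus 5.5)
The plan is to compute each term $\langle w^2\phi_n, z^2\psi_n\rangle$ in closed form as a rational function of $n$ and then sum the squares. By Lemma \ref{l3.1} and the homogeneity of $p=(z-w)^2$,
\[
\langle w^2\phi_n,z^2\psi_n\rangle=\frac{1}{D_{n+1}D_n}\sum_{i,j=0}^n A^n_{0,j}\,\overline{A^n_{n,i}}\;\langle p z^j w^{n-j+2},\,p z^{i+2}w^{n-i}\rangle .
\]
The inner product on the right depends only on $i-j$. Writing $c_m=\langle pw^m,pz^m\rangle$ (with $c_0=6$, $c_{\pm1}=-4$, $c_{\pm2}=1$, and $c_m=0$ for $|m|\ge3$), it equals $c_{i+2-j}$ up to the conjugation convention. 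Since $c_m$ vanishes for $|m|\geq 3$, the double sum is a short banded bilinear form in the cofactor vectors $x=(A^n_{0,j})_j$ and $y=(A^n_{n,i})_i$. So the whole problem reduces to knowing all cofactors explicitly.

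\textbf{Step 1: explicit cofactors.} The entries of $A^n$ are real and symmetric, so $x$ is $D_{n+1}$ times the first column of $(A^n)^{-1}$; that is, $A^nx=D_{n+1}e_0$. The interior rows of $A^n$ give the recurrence
\[
x_{j-2}-4x_{j-1}+6x_j-4x_{j+1}+x_{j+2}=0,
\]
which says $\Delta^4x=0$, so $x_j$ is a cubic polynomial in $j$. Extending this cubic to ghost indices, the four boundary rows become the following conditions:
\begin{itemize}
\item row $1$ forces $x_{-1}=0$;
\item rows $n-1$ and $n$ force $x_{n+1}=x_{n+2}=0$;
\item row $0$ forces $-x_{-2}+4x_{-1}=D_{n+1}$, i.e. $x_{-2}=-D_{n+1}$.
\end{itemize}
Hence
\[
A^n_{0,j}=\frac{D_{n+1}}{(n+3)(n+4)}\,(j+1)(n+1-j)(n+2-j),\qquad D_{n+1}=\frac{(n+2)(n+3)^2(n+4)}{12}.
\]
As a sanity check, this reproduces $A^n_{0,n}=(n+1)(n+2)(n+3)/6$ from Theorem \ref{t3.1}. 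By persymmetry of the symmetric Toeplitz matrix, $A^n_{n,i}=A^n_{0,n-i}$.

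\textbf{Step 2: the inner product.} I would evaluate the banded form using $A^nx=D_{n+1}e_0$ rather than brute-force summation. The shifted form $\sum_{i,j}x_j y_i c_{i+2-j}$ equals the pairing of $y$ against $(A^nx)$ shifted by two indices, plus boundary corrections. These corrections come from the few terms where the shifted band leaves $\{0,\dots,n\}$, and involve only $x_{n-1},x_n,y_0,y_1$ (equivalently $x_{-1},x_{-2},x_{n+1},x_{n+2}$ in the ghost extension). The main term picks out $D_{n+1}$ times a single entry of $y$, essentially $A^n_{n,2}=A^n_{0,n-2}$. Every ingredient is explicit from Step 1, so $\langle w^2\phi_n,z^2\psi_n\rangle$ becomes a rational function $R(n)$. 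I expect $R(n)$ to have denominator dividing $(n+1)(n+2)^2(n+3)^2(n+4)$ after cancellation.

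\textbf{Step 3: summation.} Compute $|R(n)|^2$, decompose it into partial fractions in $n$, and sum over $n\ge0$. The simple-pole terms must telescope to rationals. The double poles $1/(n+a)^2$ produce multiples of $\sum 1/k^2=\pi^2/6$ minus finite rational corrections, giving a result of the form $\alpha\pi^2-\beta$. This should yield $\frac{178}{3}\pi^2-585$. The $n=0$ term (with $\phi_0=\psi_0=p/\|p\|$) and small $n$ where the boundary corrections overlap should be checked directly against the general formula, and included separately if they do not fit it.

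The main obstacle is Step 2: getting the boundary corrections of the shifted banded form exactly right, with correct signs and conjugation conventions for $c_{\pm m}$. Any slip there changes the $\pi^2$ coefficient. To guard against this, I would verify $R(n)$ numerically for $n=1,2,3$ by directly computing $\langle w^2\phi_n,z^2\psi_n\rangle$ from the explicit cofactors before carrying out the partial-fraction summation.
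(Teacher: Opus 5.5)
Your skeleton is the paper's: write $\langle w^2\phi_n,z^2\psi_n\rangle$ as a banded bilinear form in the cofactor vectors, let the cofactor identities kill almost all of it, and sum the squares. Step 1 is correct, and it is a more uniform route than the paper's proof of this theorem. The paper takes $D_n$ and $A^n_{0,n}$ from Lemma \ref{l2.2} and gets $A^n_{n,n-1}$ from the parity-split recursion for $F_n$. Your cubic with $q(-1)=q(n+1)=q(n+2)=0$, $q(-2)=-D_{n+1}$ is essentially the argument of Lemma \ref{l3.4}, and it reproduces that lemma via $A^n_{n,i}=A^n_{0,n-i}$. If you phrase it as ``this cubic satisfies $A^nx=D_{n+1}e_0$, and $A^n$ is invertible,'' it holds for every $n\ge 0$, so small $n$ need no separate check.

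The gap is in Step 2, and it is exactly the direction slip you were worried about. Since $A^nx=D_{n+1}e_0$ and the shifted row index is $i+2\ge 2$, every ``main'' term $\sum_j c_{|i+2-j|}x_j=(A^nx)_{i+2}$ with $i\le n-2$ is zero. So there is no term $D_{n+1}A^n_{n,2}$. That is the main term of the reversed pairing $\langle z^2\phi_n,w^2\psi_n\rangle$, which is a genuinely different quantity: it vanishes at $n=2$, while $\langle w^2\phi_2,z^2\psi_2\rangle=-1/5$. Carrying out Step 2 as described therefore gives the wrong $R(n)$.

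The whole value comes from $i=n-1,n$, where row $i+2$ falls off the matrix, and what you need is $x_{n-1},x_n$ paired with $y_{n-1},y_n$, not $y_0,y_1$. By your cubic, $\sum_j c_{|n+1-j|}x_j=x_{n-1}-4x_n=-q(n+3)=-\frac{2D_{n+1}}{n+3}=-A^{n+1}_{0,n+1}$ and $\sum_j c_{|n+2-j|}x_j=x_n=A^n_{0,n}$. With $A^n_{n,n-1}=A^n_{0,1}=\frac{n(n+1)(n+2)(n+3)}{6}$ this gives
\[
\langle w^2\phi_n,z^2\psi_n\rangle=\frac{-A^{n+1}_{0,n+1}A^n_{n,n-1}+A^n_{0,n}D_n}{D_{n+1}D_n}=\frac{8}{n+2}-\frac{16}{n+3}+\frac{6}{n+4},
\]
which is \eqref{3.2}. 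The paper's simplified form has the opposite overall sign, which is harmless for $\Sigma_2$. The sign above gives $1/6=\langle w^2p,z^2p\rangle/\|p\|^2$ at $n=0$, so $n=0$ needs no special treatment.

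Squaring and summing over $n\ge 0$, the simple poles telescope and you get $(64+256+36)\zeta(2)-585=\frac{178}{3}\pi^2-585$. Your Step 3 has to be carried out with this $R(n)$; as written it only asserts the value.
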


\begin{proof}
By Lemma \ref{l3.1}, we know
\begin{equation}\label{3.1}
\langle w^2 \phi_n, z^2\psi_n\rangle=
\frac{1}{D_{n+1}D_n} \left\langle w^2 \sum_{i=0}^{n}pA_{0,i}^nz^iw^{n-i}, z^2 \sum_{j=0}^npA_{n,j}^nz^jw^{n-j} \right\rangle\\
\end{equation}
\begin{align*}
&\,\,\,\,\,\,\,\,\,\,\,\,\,\,\,\,\,\,\,\,\,\,\,\,=\frac{1}{D_{n+1}D_n} \left\langle \sum_{i=0}^{n}pA_{0,i}^nz^iw^{n+2-i}, \sum_{j=0}^npA_{n,j}^nz^{j+2}w^{n-j} \right\rangle\\
&\,\,\,\,\,\,\,\,\,\,\,\,\,\,\,\,\,\,\,\,\,\,\,\,=\frac{1}{D_{n+1}D_n} \sum_{i,j=0}^{n} A_{0,i}^n \left\langle pw^{j+2-i}, pz^{j+2-i} \right\rangle A_{n,j}^n,  
\end{align*}

where $p=(z-w)^2$ and
\begin{align*}
 A^n&=  \left(
    \begin{array}{ccccc}
      \|p\|^2 & \overline{\langle pw,pz \rangle} & \overline{\langle pw^2,pz^2 \rangle} & \cdots & \overline{\langle pw^n,pz^n \rangle} \\
      \langle pw,pz \rangle & \|p\|^2 & \overline{\langle pw,pz \rangle} & \cdots & \overline{\langle pw^{n-1},pz^{n-1} \rangle} \\
      \langle pw^2,pz^2 \rangle & \langle pw,pz \rangle & \|p\|^2 & \cdots & \overline{\langle pw^{n-2},pz^{n-2} \rangle} \\
      \vdots & \vdots & \vdots & \ddots & \vdots \\
      \langle pw^n,pz^n \rangle & \langle pw^{n-1},pz^{n-1} \rangle & \langle pw^{n-2},pz^{n-2} \rangle & \cdots & \|p\|^2 \\
    \end{array}
  \right).
\end{align*}

We write the last summation as 
\begin{align*}
&\Bigg \langle \left(
    \begin{array}{cccc}
      \langle pw^2, pz^2 \rangle & \langle pw, pz \rangle & \cdots & \overline{\langle pw^{n-2},pz^{n-2} \rangle} \\
      \langle pw^3, pz^3 \rangle & \langle pw^2, pz^2 \rangle & \cdots & \overline{\langle pw^{n-3},pz^{n-3} \rangle} \\
     \vdots & \vdots & \ddots & \vdots \\
      \langle pw^{n+2}, pz^{n+2} \rangle & \langle pw^{n+1}, pz^{n+1} \rangle & \cdots & \langle pw^{2}, pz^{2} \rangle \\
    \end{array}
  \right )\,\,\left(
  \begin{array}{cccc}
  A_{0,0}^n\\
  A_{0,1}^n\\
  \vdots \\
  A_{0,n}^n\\
  \end{array}
  \right ),\,\,\left(
  \begin{array}{cccc}
  A_{n,0}^n\\
  A_{n,1}^n\\
  \vdots \\
  A_{n,n}^n\\
  \end{array}
  \right )
  \Bigg \rangle\\
&=:\Bigg \langle
  \left(
  \begin{array}{cccc}
 X_0\\
  X_1\\
  \vdots \\
 X_n\\
  \end{array}
  \right ),\,\,\left(
  \begin{array}{cccc}
  A_{n,0}^n\\
  A_{n,1}^n\\
  \vdots \\
  A_{n,n}^n\\
  \end{array}
  \right )
 \Bigg \rangle 
\end{align*}


Using cofactor theorem, we have
\[X_i=\sum_{j=0}^n a_{i+2,j} A_{0,j}^n=0,\,\,\,\,i=0,1,2,\cdots,n-2.\]

Let
\begin{equation*}
 A^{n+1}= \left(
    \begin{array}{ccccc}
      \|p\|^2 & \overline{\langle pw,pz \rangle} & \overline{\langle pw^2,pz^2 \rangle} & \cdots & \overline{\langle pw^{n+1},pz^{n+1} \rangle} \\
      \langle pw,pz \rangle & \|p\|^2 & \overline{\langle pw,pz \rangle} & \cdots & \overline{\langle pw^n,pz^n \rangle} \\
      \langle pw^2,pz^2 \rangle & \langle pw,pz \rangle & \|p\|^2 & \cdots & \overline{\langle pw^{n-1},pz^{n-1} \rangle} \\
      \vdots & \vdots & \vdots & \ddots & \vdots \\
      \langle pw^{n+1},pz^{n+1} \rangle & \langle pw^n,pz^n \rangle & \langle pw^{n-1},pz^{n-1} \rangle & \cdots & \|p\|^2 \\
    \end{array}
  \right).
\end{equation*}

The following equality is from p.513 of \cite{Fat}:
\[X_{n-1}=-A_{0,n+1}^{n+1}.\]

Moreover, recall that $\langle pw^2,pz^2 \rangle=1,\,\,\langle pw^k,pz^k \rangle=0,\,\,k \geq 3$, we obtain
\begin{align*}
X_n&=\langle pw^{n+2}, pz^{n+2} \rangle A_{0,0}^n+\langle pw^{n+1}, pz^{n+1} \rangle A_{0,1}^n+\cdots+\langle pw^2,pz^2 \rangle A_{0,n}^n=A_{0,n}^n.
\end{align*}

Since  
\begin{equation*}
 A^{n+1}=\left(
    \begin{array}{cccccccc}
      6 & -4 & 1 & 0 & \cdots & 0 & 0 & 0 \\
      -4 & 6 & -4 & 1 & \cdots & 0 & 0 & 0 \\
      1 & -4 & 6 & -4 & \cdots & 0 & 0 & 0 \\
      0 & 1 & -4 & 6 & \cdots & 0 & 0 & 0 \\
      \vdots & \vdots & \vdots & \vdots & \ddots & \vdots & \vdots & \vdots\\
      0 & 0 & 0 & 0 & \cdots & 6 & -4 & 1 \\
      0 & 0 & 0 & 0 & \cdots & -4 & 6 & -4 \\
      0 & 0 & 0 & 0 & \cdots & 1 & -4 & 6 \\
    \end{array}
  \right)_{(n+2) \times (n+2)}.
\end{equation*} 
and $A_{0,n}^n=\frac{(n+1)(n+2)(n+3)}{6}$, then combine the above calculation with \eqref{3.1}, we obtain
\begin{equation}\label{3.2}
\langle w^2 \phi_n, z^2\psi_n\rangle=\frac{1}{D_{n+1}D_n} \left(X_{n-1} \cdot A_{n,n-1}^n + X_n \cdot A_{n,n}^n \right)
\end{equation}
\begin{align*}
&\,\,\,\,\,\,\,\,\,\,\,\,\,\,\,\,\,\,\,\,\,\,\,\,\,\,\,\,\,\,\,\,\,\,\,\,\,\,\,\,\,=\frac{1}{D_{n+1}D_n} \left(-A_{0,n+1}^{n+1} \cdot A_{n,n-1}^n + A_{0,n}^n \cdot A_{n,n}^n \right).
\end{align*}

It is easy to see that $A_{n,n}^n=(-1)^{2n} D_n=D_n$, thus, in order to get $\langle w^2 \phi_n, z^2\psi_n\rangle$, we only need to calculate $A_{n,n-1}^n$.

For $A_{n,n-1}^n$, we have
\begin{equation*}
 A_{n,n-1}^n=(-1)^{2n-1} \left|
    \begin{array}{cccccccc}
      6 & -4 & 1 & 0 & \cdots & 0 & 0 & 0 \\
      -4 & 6 & -4 & 1 & \cdots & 0 & 0 & 0 \\
      1 & -4 & 6 & -4 & \cdots & 0 & 0 & 0 \\
      0 & 1 & -4 & 6 & \cdots & 0 & 0 & 0 \\
      \vdots & \vdots & \vdots & \vdots & \ddots & \vdots & \vdots & \vdots\\
      0 & 0 & 0 & 0 & \cdots & 6 & -4 & 0 \\
      0 & 0 & 0 & 0 & \cdots & -4 & 6 & 1 \\
      0 & 0 & 0 & 0 & \cdots & 1 & -4 & -4 \\
    \end{array}
  \right|_{n \times n}.
\end{equation*}

Let
\begin{equation*}
F_n:=\left|
    \begin{array}{cccccccc}
      6 & -4 & 1 & 0 & \cdots & 0 & 0 & 0 \\
      -4 & 6 & -4 & 1 & \cdots & 0 & 0 & 0 \\
      1 & -4 & 6 & -4 & \cdots & 0 & 0 & 0 \\
      0 & 1 & -4 & 6 & \cdots & 0 & 0 & 0 \\
      \vdots & \vdots & \vdots & \vdots & \ddots & \vdots & \vdots & \vdots\\
      0 & 0 & 0 & 0 & \cdots & 6 & -4 & 0 \\
      0 & 0 & 0 & 0 & \cdots & -4 & 6 & 1 \\
      0 & 0 & 0 & 0 & \cdots & 1 & -4 & -4 \\
    \end{array}
  \right|_{n \times n},\,\,\,\,n \geq 1,
\end{equation*}
then
\[ A_{n,n-1}^n=-F_n.\]

Obviously, we have
\begin{align*}
F_1=-4,\,\,\,\,\,\,F_2= \left|
    \begin{array}{cc}
      6 & 1 \\
      -4 & -4 \\
    \end{array}
  \right|=-20,\,\,\,\,\,\,F_3=\left|
    \begin{array}{ccc}
      6 & -4 & 0 \\
      -4 & 6 & 1 \\
      1 & -4 & -4 \\
    \end{array}
  \right|=-60.
\end{align*}

It is not hard to check that
\[F_n=-4(D_{n-1}-D_{n-2}) + F_{n-2}.\]

If $n$ is even, $n \geq 2$, then
\begin{align*}
F_n&=-4(D_{n-1}-D_{n-2}) - 4(D_{n-3}-D_{n-4})+ F_{n-4} \\
&=-4(D_{n-1}-D_{n-2}) - 4(D_{n-3}-D_{n-4}) -\cdots-4(D_3-D_2)+F_2.
\end{align*}

Since
\[D_{n-1}-D_{n-2}=\frac{n(n+1)^2(n+2)}{12}-\frac{(n-1)n^2(n+1)}{12}=\frac{2n^3+3n^2+n}{6}=\frac{1}{3}n^3+\frac{1}{2}n^2+\frac{1}{6}n,\]
hence, 
\begin{align*}
F_n&=-4\left(\frac{1}{3}\left(n^3+(n-2)^3+\cdots+6^3+4^3\right) + \frac{1}{2}\left(n^2+(n-2)^2+\cdots+8^2+4^2\right)\right)\\
&\,\,\,\,\,\,-4\left(\frac{1}{6}\left(n+(n-2)+\cdots+6+4\right)\right)-20=-\frac{n(n+1)(n+2)(n+3)}{6}.
\end{align*}

If $n$ is odd, $n \geq 3$, then
\begin{align*}
F_n&=-4(D_{n-1}-D_{n-2}) - 4(D_{n-3}-D_{n-4})+ F_{n-4} \\
&=-4(D_{n-1}-D_{n-2}) - 4(D_{n-3}-D_{n-4}) -\cdots-4(D_4-D_3)+F_3.
\end{align*}

Thus, 
\begin{align*}
F_n&=-4\left(\frac{1}{3}\left(n^3+(n-2)^3+\cdots+7^3+5^3\right) + \frac{1}{2}\left(n^2+(n-2)^2+\cdots+7^2+5^2\right)\right)\\
&\,\,\,\,\,\,-4\left(\frac{1}{6}\left(n+(n-2)+\cdots+7+5\right)\right)-60=-\frac{n(n+1)(n+2)(n+3)}{6}.
\end{align*}

Therefore,
\[ A_{n,n-1}^n=-F_n=\frac{n(n+1)(n+2)(n+3)}{6},\]
and it follows from \eqref{3.2} that
\begin{align*}
&\langle w^2 \phi_n, z^2\psi_n\rangle\\
&=\frac{12}{(n+1)(n+2)^2(n+3)}\frac{12}{(n+2)(n+3)^2(n+4)}  \left(-\frac{(n+2)(n+3)(n+4)}{6}\cdot \frac{n(n+1)(n+2)(n+3)}{6}\right)\\
& +\frac{12}{(n+1)(n+2)^2(n+3)}\frac{12}{(n+2)(n+3)^2(n+4)} \left(\frac{(n+1)(n+2)(n+3)}{6} \cdot \frac{(n+1)(n+2)^2(n+3)}{12}\right)\\
&=\frac{2(n^2+5n-2)}{(n+2)(n+3)(n+4)}.
\end{align*}

Now, we can calculate $\Sigma_2$.
\begin{align*}
\Sigma_2&=\sum_{n=0}^{\infty}\left|\langle w^2 \phi_n, z^2 \psi_n \rangle\right|^2=\sum_{n=0}^{\infty}\left(\frac{2}{n+4}-\frac{16}{(n+2)(n+3)(n+4)}\right)^2=\frac{178}{3} \pi^2-585.
\end{align*}

\end{proof}

In the proof of Theorem \ref{t3.3}, we obtain equality \eqref{3.2} for submodule $[(z-w)^2]$. Using a similar process, we have
\[\langle \phi_n, \psi_n\rangle=\frac{1}{D_{n+1}D_n} \left(D_{n+1} \cdot A_{n,0}^n \right),\]

\[\langle w \phi_n, z\psi_n\rangle=\frac{1}{D_{n+1}D_n} \left(-A_{0,n+1}^{n+1} \cdot A_{n,n}^n \right),\]

\[\langle w^3 \phi_n, z^3\psi_n\rangle=\frac{1}{D_{n+1}D_n} \left(-A_{0,n+1}^{n+1} \cdot A_{n,n-2}^n + A_{0,n}^n \cdot A_{n,n-1}^n \right),\]

\[\langle w^4 \phi_n, z^4\psi_n\rangle=\frac{1}{D_{n+1}D_n} \left(-A_{0,n+1}^{n+1} \cdot A_{n,n-3}^n + A_{0,n}^n \cdot A_{n,n-2}^n \right),\]

\[\cdots\]

\[\langle w^{n+1} \phi_n, z^{n+1}\psi_n\rangle=\frac{1}{D_{n+1}D_n} \left(-A_{0,n+1}^{n+1} \cdot A_{n,0}^n + A_{0,n}^n \cdot A_{n,1}^n \right),\]

\[\langle w^{n+2} \phi_n, z^{n+2}\psi_n\rangle=\frac{1}{D_{n+1}D_n} \left(A_{0,n}^n \cdot A_{n,0}^n \right),\]
and
\[\langle w^k \phi_n, z^k\psi_n\rangle=0,\,\,\,\,\,\,k \geq n+3.\]

Therefore, we can conclude that 
\begin{align*}
\langle w^k \phi_n, z^k\psi_n\rangle=\left\lbrace
\begin{array}{ll}
\frac{A_{n,0}^n}{D_n},\,\,\,\,\,\, & k=0,\\
\frac{1}{D_n D_{n+1}} \left(-A_{0,n+1}^{n+1} \cdot A_{n,n}^n \right),\,\,\,\,\,\, & k=1,\\
\frac{1}{D_n D_{n+1}} \left(-A_{0,n+1}^{n+1} \cdot A_{n,n+1-k}^n + A_{0,n}^n \cdot A_{n,n+2-k}^n \right),\,\,\,\,\,\, &2 \leq k \leq n+1,\\
\frac{1}{D_n D_{n+1}} \left(A_{0,n}^n \cdot A_{n,0}^n \right),\,\,\,\,\,\, & k=n+2,\\
0,\,\,\,\,\,\, &k \geq n+3,
\end{array}
\right.
\end{align*}
or equivalently
\begin{align*}
\langle w^k \phi_n, z^k\psi_n\rangle=\left\lbrace
\begin{array}{ll}
0,\,\,\,\,\,\, &n \leq k-3,\\
\frac{1}{D_n D_{n+1}} \left(A_{0,n}^n \cdot A_{n,0}^n \right),\,\,\,\,\,\, & n=k-2,\\
\frac{1}{D_n D_{n+1}} \left(-A_{0,n+1}^{n+1} \cdot A_{n,n+1-k}^n + A_{0,n}^n \cdot A_{n,n+2-k}^n \right),\,\,\,\,\,\, &n \geq k-1.
\end{array}
\right.
\end{align*}

As a result, when $k \geq 3$, we have
\begin{align*}
\Sigma_k&=\sum_{n=0}^{\infty}|\langle w^k \phi_n, z^k \psi_n \rangle|^2=\sum_{n=0}^{k-3} |\langle w^k \phi_n, z^k \psi_n \rangle|^2 + |\langle w^k \phi_{k-2}, z^k \psi_{k-2} \rangle|^2 + \sum_{n=k-1}^{\infty}|\langle w^k \phi_n, z^k \psi_n \rangle|^2\\
&=\frac{1}{D_n^2 D_{n+1}^2} \left(A_{0,n}^n \cdot A_{n,0}^n \right)^2 \bigg|_{n=k-2}+\sum_{n=k-1}^{\infty} \left|\frac{1}{D_n D_{n+1}} \left(-A_{0,n+1}^{n+1} \cdot A_{n,n+1-k}^n + A_{0,n}^n \cdot A_{n,n+2-k}^n \right) \right|^2.
\end{align*}

Previously, we have obtained $A_{0,n}^n=A_{n,0}^n=\frac{(n+1)(n+2)(n+3)}{6}$.

Thus, in order to calculate $\Sigma_k$, $k \geq 3$, we need the cofactors of the last row of elements in matrix $A^n$.

\begin{lemma}\label{l3.4}
For the homogeneous submodule $[(z-w)^2]$, we have 
\[A_{n,k}^n=-\frac{(n+2)(n+3)}{12}(k+1)(k+2)(k-n-1),\,\,\,\,\,\,\,k=0,1,2,\cdots,n.\]
\end{lemma}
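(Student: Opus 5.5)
The plan is to identify the vector of last-row cofactors as the unique solution of a banded linear system and then solve that system with a difference-equation argument. Write $x_k:=A_{n,k}^n$ for $k=0,1,\dots,n$. Expanding $\det A^n$ along the $n$-th row, and using that expansion against any other row gives a determinant with two equal rows, we have
\[
\sum_{k=0}^{n} a_{i,k}\,x_k=\delta_{i,n}\det A^n=\delta_{i,n}D_{n+1},\qquad i=0,1,\dots,n .
\]
In other words, $A^n x = D_{n+1}e_n$. By Theorem \ref{t3.1}, $D_{n+1}=\frac{(n+2)(n+3)^2(n+4)}{12}\neq 0$, so $A^n$ is invertible and this system has a unique solution. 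It therefore suffices to show that the proposed vector
\[
y_k:=-\frac{(n+2)(n+3)}{12}(k+1)(k+2)(k-n-1)
\]
satisfies $A^n y=D_{n+1}e_n$.

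The key observation is that $y_k$ makes sense for every integer $k$, as a cubic polynomial in $k$, and that it vanishes at $k=-2,-1$ and at $k=n+1$. Since the rows of $A^n$ are the stencil $(1,-4,6,-4,1)$ truncated at the boundary, we can pad with the zero values $y_{-2}=y_{-1}=y_{n+1}=0$. Then, for every $0\le i\le n-1$, the $i$-th equation becomes exactly
\[
y_{i-2}-4y_{i-1}+6y_i-4y_{i+1}+y_{i+2}=\Delta^4 y_{i-2}.
\]
For $i\le n-2$ no padding on the right is needed, and for $i=n-1$ the only padded term on the right is $y_{n+1}=0$. The fourth finite difference of a cubic polynomial vanishes identically, so all of these rows give $0$, as required.

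For the last row, $i=n$, the actual left-hand side is $y_{n-2}-4y_{n-1}+6y_n$. Using again $\Delta^4 y_{n-2}=0$ and $y_{n+1}=0$, this equals $-y_{n+2}$, where $y_{n+2}$ is the value of the polynomial at $k=n+2$. A direct evaluation gives
\[
-y_{n+2}=\frac{(n+2)(n+3)}{12}(n+3)(n+4)\cdot 1=\frac{(n+2)(n+3)^2(n+4)}{12}=D_{n+1}.
\]
Hence $y$ solves the system, and by uniqueness $A^n_{n,k}=y_k$ for all $k$.

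The main point needing care is the bookkeeping at the two boundaries: one must check that the truncated rows $0$, $1$ and $n-1$ coincide with the full stencil once the padded zeros are inserted, and pin down which out-of-range value survives in row $n$. A useful consistency check is that $y_0=\frac{(n+1)(n+2)(n+3)}{6}$ agrees with the value $A_{n,0}^n=A_{0,n}^n$ used earlier, and that $y_n=D_n$ agrees with $A_{n,n}^n=D_n$. For small $n$, namely $n\le 2$, the stencil degenerates, and I would verify those cases directly; the padding argument in fact still applies there verbatim.
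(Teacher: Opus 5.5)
Your proof is correct and complete. It uses the same mechanism as the paper's: the stencil $(1,-4,6,-4,1)$ annihilates cubics, and the truncated boundary rows force the cubic to vanish just outside $\{0,\dots,n\}$. However, it runs in the opposite logical direction.

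The paper solves rather than verifies. It asserts that the interior equations \eqref{3.6} force $C_k=A^n_{n,k-1}$ to be a cubic $P(k)=ak^3+bk^2+ck+d$. It then extracts $d=0$ and $c=b-a$ (i.e.\ $P(0)=P(-1)=0$) from the first two rows \eqref{3.3}, \eqref{3.4}, and gets $P(n+2)=0$ by comparing row \eqref{3.5} with the extended recurrence. Finally it fixes $a$ via the separate fact $C_{n+1}=A^n_{n,n}=D_n$; the last equation, with right-hand side $D_{n+1}$, is never used. After the index shift these three conditions are exactly your zeros at $k=-2,-1,n+1$, and your consistency check $y_n=D_n$ is precisely the paper's normalization.

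Your guess-and-verify route, with uniqueness from $\det A^n=D_{n+1}\neq 0$, buys two things. First, you never need the finite-range claim that every solution of the truncated recurrence is a cubic; the paper attributes this to ``the theory of difference equations,'' but it really needs a small dimension count. Second, your padding argument treats all $n\ge 0$ uniformly. By contrast, \eqref{3.3}--\eqref{3.5} are distinct equations in existing unknowns only once $n\ge 3$, so the paper tacitly leaves small $n$ aside.

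The paper's route, in turn, produces the formula rather than requiring it as input. It also needs no invertibility, since its necessary conditions already pin down $a,b,c,d$. (Invertibility of $A^n$ does not actually depend on Theorem \ref{t3.1}: up to complex conjugation, $A^n$ is the Gram matrix of the linearly independent functions $pz^jw^{n-j}$, $0\le j\le n$, hence positive definite.)
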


\begin{proof}
Recall that if $p=(z-w)^2$, then
\begin{equation*}
 A^n=\left(
    \begin{array}{cccccccc}
      6 & -4 & 1 & 0 & \cdots & 0 & 0 & 0 \\
      -4 & 6 & -4 & 1 & \cdots & 0 & 0 & 0 \\
      1 & -4 & 6 & -4 & \cdots & 0 & 0 & 0 \\
      0 & 1 & -4 & 6 & \cdots & 0 & 0 & 0 \\
      \vdots & \vdots & \vdots & \vdots & \ddots & \vdots & \vdots & \vdots\\
      0 & 0 & 0 & 0 & \cdots & 6 & -4 & 1 \\
      0 & 0 & 0 & 0 & \cdots & -4 & 6 & -4 \\
      0 & 0 & 0 & 0 & \cdots & 1 & -4 & 6 \\
    \end{array}
  \right)_{(n+1) \times (n+1)},
\end{equation*}
obviously, $A^n$ is a five-diagonal Toeplitz matrix.

Denote the cofactors of the last row of elements in matrix $A^n$ by $C_1,C_2,\cdots,C_{n+1}$, and use $i$ to represent the $i$-th row of matrix $A^n$, $i=1,2,3,\cdots,n+1$. Then by cofactor theorem, we have
\begin{equation}\label{3.3}
i=1,\,\,\,\,\,\,6C_1-4C_2+C_3=0,
\end{equation}
\vspace{-0.7cm}
\begin{equation}\label{3.4}
i=2,\,\,\,\,\,\,-4C_1+6C_2-4C_3+C_4=0,
\end{equation}
\vspace{-0.5cm}
\[i=3,\,\,\,\,\,\,C_1-4C_2+6C_3-4C_4+C_5=0,\]
\[i=4,\,\,\,\,\,\,C_2-4C_3+6C_4-4C_5+C_6=0,\]
\[\cdots\]
\[i=n-1,\,\,\,\,\,\,C_{n-3}-4C_{n-2}+6C_{n-1}-4C_n+C_{n+1}=0,\]
\begin{equation}\label{3.5}
i=n,\,\,\,\,\,\,C_{n-2}-4C_{n-1}+6C_n-4C_{n+1}=0,
\end{equation}
\[i=n+1,\,\,\,\,\,\,C_{n-1}-4C_n+6C_{n+1}=D_{n+1}.\]

Hence, we have
\begin{equation}\label{3.6}
3 \leq i \leq n-1,\,\,\,\,\,\,C_{i-2}-4C_{i-1}+6C_i-4C_{i+1}+C_{i+2}=0.
\end{equation}
Then by the theory of difference equations, the solution of \eqref{3.6} is a cubic polynomial, denoted as
\[P(k)=C_k=ak^3+bk^2+ck+d,\,\,\,\,\,\,k=1,2,3,\cdots,n+1.\]

Therefore, \eqref{3.6} implies
\begin{equation}\label{3.7}
P(k-2)-4P(k-1)+6P(k)-4P(k+1)+P(k+2)=0.
\end{equation}

If we take $k=n$, then \eqref{3.7} becomes
\begin{equation}\label{3.8}
P(n-2)-4P(n-1)+6P(n)-4P(n+1)+P(n+2)=0.
\end{equation}

Moreover, from \eqref{3.5}, we have
\begin{equation}\label{3.9}
P(n-2)-4P(n-1)+6P(n)-4P(n+1)=0.
\end{equation}

Combining \eqref{3.8} with \eqref{3.9}, we obtain
\[P(n+2)=0,\]
i.e.
\begin{equation}\label{3.10}
a(n+2)^3+b(n+2)^2+c(n+2)+d=0.
\end{equation}

Since $C_k=ak^3+bk^2+ck+d,\,\,k=1,2,3,\cdots,n+1$, from \eqref{3.4} and \eqref{3.3}, we get 
\[d=0,\,\,\,\,c=b-a,\]
and it follows from \eqref{3.10}
\[b=-(n+1)a.\]

As a result, we conclude that $b=-(n+1)a,c=-(n+2)a,d=0$, and therefore
\[C_k=P(k)=ak^3-(n+1)ak^2-(n+2)ak=ak(k+1)(k-n-2).\]

Recall that $C_{n+1}=D_n=\frac{(n+1)(n+2)^2(n+3)}{12}$, which yields
\[a\cdot(n+1)(n+2)\cdot(-1)=\frac{(n+1)(n+2)^2(n+3)}{12}.\]
Hence, we have
\[a=-\frac{(n+2)(n+3)}{12}.\]

Therefore, the cofactors of the last row of elements in matrix $A^n$ are
\[C_k=-\frac{(n+2)(n+3)}{12}k(k+1)(k-n-2),\,\,\,\,\,\,\,k=1,2,3,\cdots,n+1.\]
i.e.
\[A_{n,k}^n=-\frac{(n+2)(n+3)}{12}(k+1)(k+2)(k-n-1),\,\,\,\,\,\,\,k=0,1,2,\cdots,n.\]
\end{proof}

Combining the conclusion of Lemma \ref{l3.4} with $D_n=\frac{(n+1)(n+2)^2(n+3)}{12}$ and 
\[A_{0,n}^n=A_{n,0}^n=\frac{(n+1)(n+2)(n+3)}{6},\]
we have 
\begin{align*}
\Sigma_k&=\frac{1}{D_n^2 D_{n+1}^2} \left(A_{0,n}^n \cdot A_{n,0}^n \right)^2 \bigg|_{n=k-2}+\sum_{n=k-1}^{\infty} \left|\frac{1}{D_n D_{n+1}} \left(-A_{0,n+1}^{n+1} \cdot A_{n,n+1-k}^n + A_{0,n}^n \cdot A_{n,n+2-k}^n \right) \right|^2\\
&=\frac{16(n+1)^2}{(n+2)^2(n+3)^2(n+4)^2} \bigg|_{n=k-2}+\sum_{n=k-1}^{\infty}\left|\frac{2(n+3-k)(n^2+5n+4+3k-3k^2)}{(n+1)(n+2)(n+3)(n+4)} \right|^2\\
&=\sum_{n=k-2}^{\infty}\left|\frac{2(n+3-k)(n^2+5n+4+3k-3k^2)}{(n+1)(n+2)(n+3)(n+4)} \right|^2,
\end{align*}
where $k \geq 3$.

Now we are ready to establish the following theorem.

\begin{theorem}\label{t3.5}
$\{\Sigma_k:k \geq 0\}$ is a decreasing sequence for submodule $[(z-w)^2]$. Moreover, 
\[
\Sigma_k=\frac{92}{105k} +\frac{46}{105k^2}+O\!\left(\frac{1}{k^3}\right),\,\,\,k \geq 3.
\]
\end{theorem}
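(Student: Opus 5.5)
We will work directly from the closed form obtained just above: for $k\ge 3$,
\[
\Sigma_k=\sum_{n=k-2}^{\infty} f_k(n)^2,\qquad f_k(n)=\frac{2(n+3-k)(n^2+5n+4+3k-3k^2)}{(n+1)(n+2)(n+3)(n+4)}.
\]
The values $\Sigma_0=\frac23\pi^2-4\approx 2.580$, $\Sigma_1=\frac23\pi^2-5\approx1.580$ and $\Sigma_2=\frac{178}{3}\pi^2-585\approx 0.607$ come from Theorems \ref{t3.2} and \ref{t3.3}. The proof then has three parts: the initial inequalities, monotonicity for $k\ge 3$, and the asymptotics.

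\textbf{Initial inequalities.} The inequalities $\Sigma_0>\Sigma_1>\Sigma_2$ are immediate numerically. For $\Sigma_2\ge\Sigma_3$, evaluate $\Sigma_3$ in closed form by partial fractions. Substituting $m=n+1$, the summand is rational in $m$ with poles only at $m=0,1,2,3$, so $\Sigma_3$ is of the form $a\pi^2+b$ with $a,b$ rational. Comparing numerically with $\Sigma_2$ finishes this step.

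\textbf{Monotonicity for $k\ge 3$.} Shift the index by setting $m=n+3-k\ge 1$, so that
\[
\Sigma_k=\sum_{m\ge1} g(m,k)^2,\qquad g(m,k)=f_k(m+k-3).
\]
Then $\Sigma_k-\Sigma_{k+1}=\sum_{m\ge1}\big(g(m,k)^2-g(m,k+1)^2\big)$, and the plan is to show this is positive. The cleanest route is termwise: prove $|g(m,k+1)|\le |g(m,k)|$ for all $m\ge1$, or failing that, prove it after grouping finitely many terms.

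The obstacle is that the quadratic factor $n^2+5n+4+3k-3k^2$ changes sign near $n\approx\sqrt3\,k$. So $g(m,\cdot)$ vanishes near $m\approx(\sqrt3-1)k$, and there a pure termwise comparison can fail. If it does, I will fall back to an analytic comparison. Write $\Sigma_k$ exactly, via partial fractions in $n$, as a rational function of $k$ plus rational multiples of the tails $\psi'(k-1),\psi'(k),\psi'(k+1),\psi'(k+2)$ of $\sum 1/j^2$. Then use the standard bounds
\[
\frac1x+\frac1{2x^2}+\frac1{6x^3}-\frac1{30x^5}<\psi'(x)<\frac1x+\frac1{2x^2}+\frac1{6x^3}
\]
to get $\Sigma_k-\Sigma_{k+1}>0$ for all $k\ge k_0$. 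The finitely many cases $3\le k<k_0$ are then checked by direct evaluation. I expect the hardest part to be controlling the cancellation in these polynomial-in-$k$ coefficients well enough that $k_0$ is small.

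\textbf{Asymptotics.} Use the same exact expression, or Euler--Maclaurin directly. Put $n=kx$. As $k\to\infty$,
\[
f_k(n)\approx\frac{2(x-1)(x^2-3)}{k\,x^4},
\]
so
\[
\Sigma_k\approx\frac1k\int_1^\infty\frac{4(x-1)^2(x^2-3)^2}{x^8}\,dx.
\]
Expanding the numerator as $4(x^6-2x^5-5x^4+12x^3+3x^2-18x+9)$ and integrating term by term gives
\[
4\Big(\tfrac15-\tfrac12-\tfrac53+6+1-3+\tfrac97\Big)=4\cdot\tfrac{23}{105}=\tfrac{92}{105},
\]
which is the stated leading constant. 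The $k^{-2}$ term, $\frac{46}{105}$, comes from two sources: the next-order correction in the integrand, and the Euler--Maclaurin endpoint term $\frac12 f_k(k-2)^2$, which is $O(k^{-2})$. Both are computed by expanding $f_k(kx+c)$ to the second order in $1/k$. Alternatively, substitute the asymptotic series for $\psi'$ into the exact closed form and read off the coefficients. The remaining terms are bounded by $O(k^{-3})$.
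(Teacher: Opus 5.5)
\textbf{Assessment.} Your fallback for monotonicity is the paper's argument. You write $\Sigma_k=P(k)S_k-Q(k)$ with $S_k=\psi'(k)=\sum_{n\ge k}n^{-2}$, using the four-term partial fraction. You then bound $S_k$ to get $\Sigma_k>\Sigma_{k+1}$ for $k\ge3$, and finish with $\Sigma_3=\frac{2906}{3}\pi^2-9560\approx0.357$.

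\textbf{The gap.} The bounds you quote for $\psi'$ are far too crude for the cancellation involved, and they give no $k_0$ at all. Reduce all tails to one using $\psi'(x+1)=\psi'(x)-x^{-2}$. One then gets exactly
\[
\Sigma_k-\Sigma_{k+1}=R(k)-T(k)S_k,\qquad T(k)=P(k+1)-P(k)=120k^5+192k^3+40k,
\]
with $R(k)=120k^4+60k^3+212k^2+96k+68+\frac{20}{k}+\frac{4}{k^2}$. The difference itself is only $\frac{92}{105k^2}+O(k^{-4})$.

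Since $T(k)>0$, you need an upper bound $U\ge S_k$ with $T(k)(U-S_k)=o(k^{-2})$, i.e.\ $U-S_k=o(k^{-7})$. Your upper bound $\frac1k+\frac1{2k^2}+\frac1{6k^3}$ is off by about $\frac{1}{30k^5}$. Plugging it in gives only
\[
\Sigma_k-\Sigma_{k+1}>-4-\frac{8}{3k^2},
\]
which is negative for every $k$, so no finite check can repair it. Keeping the four tails $\psi'(k-1),\dots,\psi'(k+2)$ separate and bounding each is worse, since their coefficients are of size $k^6$.

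\textbf{The fix.} This is exactly what the paper does: go two terms further in the alternating Euler--Maclaurin series and use
\[
S_k<\frac1k+\frac1{2k^2}+\frac1{6k^3}-\frac1{30k^5}+\frac1{42k^7},
\]
whose error is $O(k^{-9})$. This gives $\Sigma_k-\Sigma_{k+1}>\frac{92}{105k^2}-\frac{68}{21k^4}-\frac{20}{21k^6}$, which is positive for all $k\ge3$. So $k_0=3$ and no separate finite check is needed. The same precision is needed if you read off the asymptotics from the closed form, because $P(k)\sim20k^6$; you must keep the series through the $\frac1{42k^7}$ term to reach an $O(k^{-3})$ remainder.

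\textbf{Smaller points.}
\begin{itemize}
\item Your term-by-term integral should read $1-1-\frac53+3+\frac35-3+\frac97$. The total $\frac{23}{105}$ is right.
\item The Euler--Maclaurin endpoint term $\frac12 f_k(k-2)^2$ is $O(k^{-4})$, since $f_k(k-2)=-\frac{4(k-1)}{k(k+1)(k+2)}$. It does not contribute to the $k^{-2}$ coefficient: all of $\frac{46}{105}$ comes from the $1/k$ correction of the integrand, which does integrate to $\frac{46}{105}$.
\item $\Sigma_2\approx0.597$, not $0.607$. This is harmless.
\end{itemize}
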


\begin{proof}
Firstly, when $k \geq 3$,
\begin{align*}
\Sigma_k&=\sum_{n=k-2}^{\infty}\left|\frac{2(n+3-k)(n^2+5n+4+3k-3k^2)}{(n+1)(n+2)(n+3)(n+4)} \right|^2\\
&=\sum_{m=1}^{\infty}\left|\frac{2m((m+k-2)(m+k+1)-3k^2+3k)}{(m+k-2)(m+k-1)(m+k)(m+k+1)} \right|^2.
\end{align*}

A direct partial-fractions decomposition in the variable $m$ gives
\[\frac{m((m+k-2)(m+k+1)-3k^2+3k)}{(m+k-2)(m+k-1)(m+k)(m+k+1)}=\sum_{j=0}^3 \frac{A_j}{m+k-2+j},\]
where
\[A_0=\frac{1}{2}(k^3-3k^2+2k),\,\,\,\,\,\,\,A_1=\frac{1}{2}(-3k^3+6k^2-5k+2),\]
\[A_2=\frac{1}{2}(3k^3-3k^2+2k),\,\,\,\,\,\,\,A_3=\frac{1}{2}(-k^3+k).\]

Set $n_j:=k-2+j \,\,(j=0,1,2,3).$ Then
\begin{align*}
\Sigma_k=4\sum_{m=1}^{\infty}\left(\sum_{j=0}^3 \frac{A_j}{m+n_j} \right)^2=4\sum_{j,l=0}^3 A_jA_l \sum_{m=1}^{\infty} \frac{1}{(m+n_j)(m+n_l)}.
\end{align*}

For integers $a \geq 0$,
\[\sum_{m=1}^{\infty} \frac{1}{(m+a)^2}=\lim_{N\rightarrow \infty} \left(H_{N+a}^{(2)}-H_a^{(2)}\right)=\zeta(2)-H_a^{(2)},\]
where $H_n:=\sum_{r=1}^n\frac{1}{r}\,\,(\rm{with} \,H_0:=0)$, and
\[\zeta(2)=\sum_{i=1}^{\infty} \frac{1}{i^2},\,\,\,\,\,\,H_n^{(2)}=\sum_{r=1}^n\frac{1}{r^2}.\]

For $a \neq b$ with $a, b \geq 0$,
\[\sum_{m=1}^{\infty} \frac{1}{(m+a)(m+b)}=\frac{H_b-H_a}{b-a}.\]
This follows from
\[\frac{1}{(m+a)(m+b)}=\frac{1}{b-a}(\frac{1}{m+a}-\frac{1}{m+b}),\]
and
\[\sum_{m=1}^N \frac{1}{m+a}=H_{N+a}-H_a,\,\,\,\,\,\,\sum_{m=1}^N \frac{1}{m+b}=H_{N+b}-H_b,\]
then letting $N \rightarrow \infty$ (the difference $H_{N+a}-H_{N+b} \rightarrow 0$).

Applying these with $a=n_j,b=n_l$ yields
\[\Sigma_k=4\sum_{j=0}^3 A_j^2 (\zeta(2)-H_{n_j}^{(2)})+8 \sum_{0 \leq j < l \leq 3} A_jA_l \frac{H_{n_l}-H_{n_j}}{n_l-n_j}.\]

A simplification shows all first-order harmonic numbers cancel, leaving only $H^{(2)}$. One convenient closed form is
\[\Sigma_k=P(k) \left(\frac{\pi^2}{6}-H_{k-1}^{(2)}\right)-Q(k),\]
with
\[P(k)=2(10k^6-30k^5+49k^4-48k^3+29k^2-10k+2),\]
\[Q(k)=\frac{60k^5-150k^4+214k^3-171k^2+77k-15}{3}.\]

Let 
\[ S_k:=\sum_{n=k}^{\infty} \frac{1}{n^2}=\frac{\pi^2}{6}-H_{k-1}^{(2)}, \]
then by the Euler-Maclaurin expansion, 
$$
S_k
=
\frac{1}{k}
+\frac{1}{2k^2}
+\sum_{m=1}^{\infty}
\frac{B_{2m}}{k^{2m+1}},
$$
where $B_{2m}$ is Bernoulli number.  More specifically,
$$
S_k
=
\frac{1}{k}
+\frac{1}{2k^2}
+\frac{1}{6k^3}
-\frac{1}{30k^5}
+\frac{1}{42k^7}
-\frac{1}{30k^9}
+\cdots
$$
Substituting the following asymptotic expression of $S_k$ 
$$
\boxed{
S_k
=
\frac{1}{k}
+\frac{1}{2k^2}
+\frac{1}{6k^3}
-\frac{1}{30k^5}
+\frac{1}{42k^7}+O\!\left(\frac{1}{k^9}\right)
}
$$
into the expression of $\Sigma_k$, after a detailed calculation, we get 
\[
\boxed{
\Sigma_k=
P(k)S_k- Q(k)
= \frac{92}{105k} +\frac{46}{105k^2}+O\!\left(\frac{1}{k^3}\right).
}
\]
From this asymptotic expression, we can see that $\Sigma_k$ decreases as $k$ approaches infinity.

For preciseness, define
\[
\Delta_k:=\Sigma_k-\Sigma_{k+1}
=
P(k)S_k-Q(k)-P(k+1)\left(S_k-\frac{1}{k^2}\right)+Q(k+1).
\]
After simplification, it can be written as
\[
\Delta_k
=R(k)- T(k) S_k,
\]
where
\[
R(k)=120k^4+60k^3+212k^2+96k+68+\frac{20}{k}+\frac{4}{k^2},
\]
\[
T(k)=8k(15k^4+24k^2+5).
\]
By the  Euler-Maclaurin expansion of $S_k$,
\[ S_k<
\frac{1}{k}
+\frac{1}{2k^2}
+\frac{1}{6k^3}
-\frac{1}{30k^5}
+\frac{1}{42k^7},
\]
hence 
\begin{equation}\label{3.11}
\Delta_k>R(k)-\Big(\frac{1}{k}
+\frac{1}{2k^2}
+\frac{1}{6k^3}
-\frac{1}{30k^5}
+\frac{1}{42k^7}\Big) T(k)=\frac{92}{105k^2}
-\frac{68}{21k^4}
-\frac{20}{21k^6}.
\end{equation}
It is straightforward to verify that the right-hand side of the above inequality is positive for $k\geq 3$.

Therefore we obtain that $\Delta_k >0$ for $k\geq 3$,
and so $\{\Sigma_k\}$ is strictly decreasing on $k\geq 3$. 

Since 
\begin{align*}
\Sigma_3&=P(3)S_3-Q(3)=5812\left(\frac{\pi^2}{6}-\frac{5}{4}\right)-2295=\frac{2906}{3}\pi^2-9560,
\end{align*}
and 
\[\Sigma_0=\frac{2}{3} \pi^2-4,\,\,\,\,\Sigma_1=\frac{2}{3} \pi^2-5,\,\,\,\,\Sigma_2=\frac{178}{3} \pi^2-585,\]
we conclude that $\Sigma_k$ is strictly decreasing for $k \geq 0$.
\end{proof}

\begin{remark}
\rm(i) The proof hinges on converting the square of a four-term partial fraction into zeta/harmonic sums, where first-order harmonic terms cancel.\\
\rm(ii) We can also show  $\Sigma_k \rightarrow 0$ as $k \rightarrow \infty$ from the closed form, but this is not needed for monotonicity.\\
\rm(iii) For the inequality \eqref{3.11}, we have the following detailed calculation process: $\sum_{k=3}^N \Delta_k=\Sigma_3 -\Sigma_{N+1}$. As $N \rightarrow \infty$, the asymptotic expansion $\Sigma_k=
\frac{92}{105k} +\frac{46}{105k^2}+O\!\left(\frac{1}{k^3}\right)$ shows $\Sigma_{N+1} \rightarrow 0$. Hence, $\sum_{k=3}^{\infty} \Delta_k=\Sigma_3=\frac{2906}{3}\pi^2-9560 \approx 0.35680$. Moreover, $\sum_{k=3}^{\infty} \bigg(\frac{92}{105k^2}-\frac{68}{21k^4}-\frac{20}{21k^6}\bigg)=\frac{46\pi^2}{315} - \frac{34\pi^4}{945} - \frac{4\pi^6}{3969} + \frac{53}{16} \approx 0.28016$. Thus,
$\sum_{k=3}^{\infty} \Delta_k > \sum_{k=3}^{\infty} \bigg(\frac{92}{105k^2}-\frac{68}{21k^4}-\frac{20}{21k^6}\bigg)$.
\end{remark}

In general, the eigenvalue problem for core operators associated with arbitrary submodules of $H^2(\mathbb{D}^2)$ is difficult to study. However, combining Lemma \ref{l3.3} with the preceding results, we can compute the eigenvalues of the core operator for submodule $[(z-w)^2]$.

\begin{theorem}\label{t3.6}
For the homogeneous submodule $[(z-w)^2]$, the core operator has eigenvalues
\[0,\,\,\,1,\,\,\,\pm\frac{2}{n+2},\,\,\,n \geq 1.\]
\end{theorem}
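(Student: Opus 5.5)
The plan is to plug the explicit formulas of Theorem \ref{t3.1} into the eigenvalue formula of Lemma \ref{l3.3}. For $n\geq 1$ this lemma gives the eigenvalues
\[
\pm\left(1-\frac{(D_n^2-(A_{0,n}^n)^2)^2}{D_{n-1}D_n^2D_{n+1}}\right)^{1/2},
\]
together with $0$ and $1$. The quantities are real and positive here, so the moduli can be dropped.

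First step: put everything in terms of $n$. Theorem \ref{t3.1} gives
\[
D_n=\frac{(n+1)(n+2)^2(n+3)}{12},\qquad A_{0,n}^n=\frac{(n+1)(n+2)(n+3)}{6}.
\]
Hence $A_{0,n}^n/D_n=2/(n+2)$, which gives
\[
D_n^2-(A_{0,n}^n)^2=D_n^2\left(1-\frac{4}{(n+2)^2}\right)=D_n^2\,\frac{n(n+4)}{(n+2)^2}.
\]
The fraction in the eigenvalue formula then becomes
\[
\frac{D_n^2}{D_{n-1}D_{n+1}}\cdot\frac{n^2(n+4)^2}{(n+2)^4}.
\]

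Second step: compute the ratio
\[
\frac{D_n^2}{D_{n-1}D_{n+1}}=\frac{(n+1)^2(n+2)^4(n+3)^2}{n(n+1)^2(n+2)\cdot(n+2)(n+3)^2(n+4)}=\frac{(n+2)^2}{n(n+4)}.
\]
Multiplying this by $n^2(n+4)^2/(n+2)^4$ gives $n(n+4)/(n+2)^2$. Therefore
\[
1-\frac{n(n+4)}{(n+2)^2}=\frac{4}{(n+2)^2},
\]
and taking the square root yields the eigenvalues $\pm 2/(n+2)$ for $n\ge1$, as stated.

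Main obstacle: there is no real difficulty. The only step needing care is the algebraic cancellation, in particular the identity $(n+2)^2-4=n(n+4)$. One should also check that the formula for $D_{n-1}$ from Theorem \ref{t3.1} is valid for $n=1$, where $D_0=1$ is consistent with $\frac{1\cdot 4\cdot 3}{12}=1$. Finally, all $D_n>0$, so there are no sign or branch issues when taking the square root.
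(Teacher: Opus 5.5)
Your proposal is correct and follows essentially the same route as the paper: both substitute the closed forms of Theorem \ref{t3.1} into Lemma \ref{l3.3} and simplify to $1-\frac{n(n+4)}{(n+2)^2}=\frac{4}{(n+2)^2}$. The only difference is bookkeeping. You factor out $D_n^2$ using $A_{0,n}^n/D_n=2/(n+2)$, whereas the paper writes $D_n^2-(A_{0,n}^n)^2=(D_n+A_{0,n}^n)(D_n-A_{0,n}^n)$ and expands $D_{n-1}D_n^2D_{n+1}$ explicitly.
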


\begin{proof}
By Lemma \ref{l3.3}, we know the core operator of submodule $[(z-w)^2]$ has eigenvalues
\[0,1,\pm \left(1-\frac{(|D_n|^2-|A_{0,n}^n|^2)^2}{D_{n-1}D_n^2D_{n+1}} \right)^{1/2},\,\,\,\,n \geq 1.\]

Recall that 
\[D_n=\frac{(n+1)(n+2)^2(n+3)}{12},\,\,\,A_{0,n}^n=\frac{(n+1)(n+2)(n+3)}{6}.\]
Then
\[|D_n|^2-|A_{0,n}^n|^2=(D_n+A_{0,n}^n)(D_n-A_{0,n}^n)=\frac{n(n+1)^2(n+2)^2(n+3)^2(n+4)}{12^2},\]
and
\[D_{n-1}D_n^2D_{n+1}=\frac{n(n+1)^4(n+2)^6(n+3)^4(n+4)}{12^4}.\]

Therefore,
\[1-\frac{(|D_n|^2-|A_{0,n}^n|^2)^2}{D_{n-1}D_n^2D_{n+1}} =1-\frac{n(n+4)}{(n+2)^2}=\frac{4}{(n+2)^2},\]
and the core operator has eigenvalues
\[0,\,\,\,1,\,\,\,\pm\frac{2}{n+2},\,\,\,n \geq 1.\]

\end{proof}

By the Proposition 4.1 of \cite{Fat}, the second largest eigenvalue of the core operator is $\|[R_w^*, R_z]\|$. Therefore, for the submodule $[(z-w)^2]$, we have $\|[R_w^*, R_z]\|=\frac{2}{3}$. Moreover, the Corollary 4.3 in \cite{Fat} gives a lower bound estimate for the second largest eigenvalue, which is sharp for $[(z-w)^2]$.

\section{The numerical invariants for submodule $[z-w]$}\label{s4}
In this section, we study the numerical invariants for submodule $[z-w]$. 

Using the same notations as in Section 3, by a simple calculation, we have
\begin{equation*}
 A^n= \left(
    \begin{array}{cccccccc}
      2 & -1 & 0 & 0 & \cdots & 0 & 0 & 0 \\
      -1 & 2 & -1 & 0 & \cdots & 0 & 0 & 0 \\
      0 & -1 & 2 & -1 & \cdots & 0 & 0 & 0 \\
      0 & 0 & -1 & 2 & \cdots & 0 & 0 & 0 \\
      \vdots & \vdots & \vdots & \vdots & \ddots & \vdots & \vdots & \vdots\\
      0 & 0 & 0 & 0 & \cdots & 2 & -1 & 0 \\
      0 & 0 & 0 & 0 & \cdots & -1 & 2 & -1 \\
      0 & 0 & 0 & 0 & \cdots & 0 & -1 & 2 \\
    \end{array}
  \right)_{(n+1) \times (n+1)},
\end{equation*}
where 
\[ \|p\|^2=2,\,\,\,\,\langle pw,pz \rangle=-1,\,\,\,\,\langle pw^k,pz^k \rangle=0,\,\,k \geq 2,\]
and $p=z-w$.

From \cite{Fat}, we know $D_n=n+1$. Next, we will calculate the cofactors of the last row of elements in matrix $A^n$, and we have the following theorem.

\begin{lemma}\label{l4.1}
For the homogeneous submodule $[z-w]$, we have 
\[A_{n,k}^n=k+1,\,\,\,\,\,\,\,k=0,1,2,\cdots,n.\]
\end{lemma}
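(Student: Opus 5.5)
I will mirror the proof of Lemma \ref{l3.4}, replacing the five-diagonal matrix by the tridiagonal matrix $A^n=\mathrm{tridiag}(-1,2,-1)$ of size $(n+1)\times(n+1)$.

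Denote by $C_1,\dots,C_{n+1}$ the cofactors of the last row, so that $C_{k+1}=A^n_{n,k}$. The cofactor theorem (expansion of the determinant with row $i$ replacing the last row) gives the linear system
\[
2C_1-C_2=0,\qquad -C_{i-1}+2C_i-C_{i+1}=0\ \ (2\le i\le n),\qquad -C_n+2C_{n+1}=D_{n+1}.
\]
The interior equations form the second-order difference equation $C_{i+1}-2C_i+C_{i-1}=0$. Its solutions are the linear polynomials, so $C_k=ak+b$ for $k=1,\dots,n+1$.

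The first equation gives $2(a+b)-(2a+b)=b=0$. Hence $C_k=ak$.

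To fix $a$, use the diagonal cofactor: $C_{n+1}=A^n_{n,n}=(-1)^{2n}D_n=D_n=n+1$, using the known value $D_n=n+1$ from \cite{Fat}. This gives $a(n+1)=n+1$, so $a=1$ and $C_k=k$. Therefore $A^n_{n,k}=C_{k+1}=k+1$ for $k=0,1,\dots,n$.

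As a consistency check, the last equation reads $-n+2(n+1)=n+2=D_{n+1}$, as it should.

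The small cases need separate handling. For $n=0$ the statement is $A^0_{0,0}=1$, which holds trivially. For $n=1$ the interior system is empty, but the two boundary equations $2C_1-C_2=0$ and $C_2=D_1=2$ give $C_1=1$ directly.

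There is no real obstacle. The only point needing care is justifying that the difference-equation argument determines all $C_k$ when the interior range is short. This is immediate because the first equation and the interior recurrences determine $C_2,\dots,C_{n+1}$ successively from $C_1$. So $C_k=kC_1$ by induction, and the normalization $C_{n+1}=D_n$ fixes $C_1=1$.

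Alternatively, I would cite the explicit inverse of the discrete Laplacian, $(A^n)^{-1}_{ij}=\min(i,j)\,(n+2-\max(i,j))/(n+2)$ with $1$-based indices. This gives the last column of $\operatorname{adj}(A^n)$ as $D_n\cdot\frac{k}{n+1}=k$, which is the same conclusion.
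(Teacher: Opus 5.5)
Your proposal is correct and follows essentially the same route as the paper: the cofactor identities for the last row of the tridiagonal matrix, the linear solution $C_k=ak+b$ of the second-order difference equation, $b=0$ from the first row, and the normalization $C_{n+1}=D_n=n+1$ giving $a=1$. Your extra handling of the cases $n=0,1$, the consistency check $-C_n+2C_{n+1}=D_{n+1}$, and the alternative via the explicit inverse of the discrete Laplacian are sound additions that the paper's argument does not include.
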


\begin{proof}
Denote the cofactors of the last row of elements in matrix $A^n$ by $C_1,C_2,\cdots,C_{n+1}$, and using $i$ to represent the $i$-th row of matrix $A^n$, $i=1,2,3,\cdots,n+1$, then by cofactor theorem, we have
\begin{equation}\label{4.1}
i=1,\,\,\,\,\,\,2C_1-C_2=0,
\end{equation}
\vspace{-0.7cm}
\[i=2,\,\,\,\,\,\,-C_1+2C_2-C_3=0,\]
\[i=3,\,\,\,\,\,\,-C_2+2C_3-C_4=0,\]
\[\cdots\]
\[i=n,\,\,\,\,\,\,-C_{n-1}+2C_n-C_{n+1}=0,\]
\[i=n+1,\,\,\,\,\,\,-C_n+2C_{n+1}=D_{n+1}.\]

Hence, we have
\begin{equation}\label{4.2}
2 \leq i \leq n,\,\,\,\,\,\,-C_{i-1}+2C_i-C_{i+1}=0,
\end{equation}
and by the theory of difference equations, the solution of \eqref{4.2} is a linear polynomial, denoted as
\[P(k)=C_k=ak+b,\,\,\,\,\,\,k=1,2,3,\cdots,n+1.\]

From \eqref{4.1}, we have $2(a+b)-(2a+b)=0$, showing $b=0$.

Since $C_{n+1}=D_n=n+1$, $P(n+1)=a(n+1)=C_{n+1}=n+1$, so we have $a=1$.

Therefore, the cofactors of the last row of elements in matrix $A^n$ are
\[C_k=P(k)=k,\,\,\,\,\,\,\,k=1,2,3,\cdots,n+1.\]
It follows that
\[A_{n,k}^n=k+1,\,\,\,\,\,\,\,k=0,1,2,\cdots,n.\]
\end{proof}

Using a similar argument as in Section 3, when $k \geq 1$, we have
\begin{align*}
\Sigma_k&=\sum_{n=0}^{\infty}|\langle w^k \phi_n, z^k \psi_n \rangle|^2=\sum_{n=0}^{k-2} |\langle w^k \phi_n, z^k \psi_n \rangle|^2 + |\langle w^k \phi_{k-1}, z^k \psi_{k-1} \rangle|^2 + \sum_{n=k}^{\infty}|\langle w^k \phi_n, z^k \psi_n \rangle|^2\\
&=0+\frac{1}{D_n^2 D_{n+1}^2} \left(-A_{0,n+1}^{n+1} \cdot A_{n,0}^n \right)^2 \bigg|_{n=k-1}+\sum_{n=k}^{\infty} \left|\frac{1}{D_n D_{n+1}} \left(-A_{0,n+1}^{n+1} \cdot A_{n,n+1-k}^n \right) \right|^2\\
&=\sum_{n=k-1}^{\infty} \left|\frac{1}{D_n D_{n+1}} \left(-A_{0,n+1}^{n+1} \cdot A_{n,n+1-k}^n \right) \right|^2.
\end{align*}

By a simple calculation, we get $A_{0,n+1}^{n+1}=1$. Then for $k \geq 1$,
\begin{align*}
\Sigma_k&=\sum_{n=k-1}^{\infty} \left|\frac{1}{(n+1)(n+2)} \left(-1 \cdot (n+2-k) \right) \right|^2=\sum_{n=k-1}^{\infty} \left|\frac{n+2-k}{(n+1)(n+2)} \right|^2\\
&=\sum_{m=1}^{\infty} \frac{m^2}{(m+k-1)^2(m+k)^2}.
\end{align*}
Obviously, $\Sigma_k$ is  strictly decreasing when $k \geq 1$. 

Similarly, we have
$$
\Sigma_k = (2 k^2 - 2 k + 1) S_k -(2k-1),
$$
where $S_k=\sum_{n=k}^\infty \frac{1}{n^2}$. By the asymptotic expansion
$$
S_k = \frac{1}{k} + \frac{1}{2 k^2} + \frac{1}{6 k^3} - \frac{1}{30 k^5} + O\Big(\frac{1}{k^7}\Big),
$$
we obtain
$$
\boxed{
\Sigma_k = \frac{1}{3 k} + \frac{1}{6 k^2} + \frac{1}{10 k^3} + \frac{1}{15 k^4} + O\Big(\frac{1}{k^5}\Big).
}
$$

Moreover, by direct calculation, we have
\[\Sigma_1=\frac{1}{6} \pi^2-1,\,\,\,\,\Sigma_2=\frac{5}{6} \pi^2-8, \,\,\,\,\Sigma_3=\frac{13}{6} \pi^2-\frac{85}{4}, \cdots\]

In conclusion, this paper shows that the sequence $\{ \Sigma_k: k \geq 0 \}$ is decreasing for submodules $[(z-w)^2]$ and $[z-w]$. However, the situation for other submodules remains to be investigated, even for other homogeneous polynomial submodules. 

\medskip
\bigskip

\subsection*{Acknowledgment}
The authors are grateful to the anonymous reviewers for their insightful feedback and valuable suggestions, which significantly strengthened the quality of this work. Their input allowed us to clarify several key points and substantially improve the manuscript. Y. Liu is supported by the Natural Science Foundation Projiect in Henan Province (No. 262300421861), the funding program for young backbone teachers in higher education institutions in Henan Province (No. 2024GGJS106), the key research projects of higher education institutions in Henan Province (No. 25B110009) and the general project cultivation fund of Nanyang Normal University (No. 2025PY034), Y. Lu was supported by NNSFC (Grant No. 12031002), C. Zu was supported  by NNSFC (Grant No. 12401151), and the Postdoctral Researcher Foundation of China  (Grant No. GZB20240100).

%
\subsection*{Conflict of interest}
The authors have no conflict of interest to declare that are relevant to the content of this article. 
\subsection*{Data availability statement}
No data, models, or code were generated or used for the research described in the article.

\end{document}